\documentclass[leqno,centertags,cmex10]{amsart}

\usepackage{amsfonts,amssymb,curves,latexsym,hyperref}
\usepackage[dvipsnames,svgnames,x11names,hyperref]{xcolor}
\usepackage{blindtext}
\usepackage{graphicx}
\usepackage{amssymb}
\usepackage{bm}

\usepackage[ddmmyyyy]{datetime}
\hypersetup{colorlinks,breaklinks,
             urlcolor=blue,
             linkcolor=blue}
\newtheorem{theorem}{Theorem}
\newtheorem{proposition}[theorem]{Proposition}
\newtheorem{lemma}[theorem]{Lemma}

\newtheorem{remark}[theorem]{Remark}
\newtheorem{corollary}[theorem]{Corollary}
\newtheorem{definition}[theorem]{Definition}

\def\Fq{{\mathbb F}_q}

\def\F{\mathsf{F}}


\newcommand{\K}{\mathbb{K}}
\newcommand{\I}{\mathcal{I}}

\newcommand{\N}{\mathbb{N}}

\renewcommand{\-}{\backslash }
\newcommand{\Z}{\mathbb{Z}}

\begin{document}

\title[Higher weight spectra of Veronese codes]{Higher weight spectra of Veronese codes}

\author[Johnsen]{Trygve Johnsen}
\address{Department of Mathematics and Statistics, 
 UiT-The Arctic University of Norway  \newline \indent 
N-9037 Troms{\o}, Norway}
\email{trygve.johnsen@uit.no}

\author[Verdure]{Hugues Verdure}
\address{Department of Mathematics and Statistics, 
 UiT-The Arctic University of Norway  \newline \indent 
N-9037 Troms{\o}, Norway}
\email{hugues.verdure@uit.no}
\thanks{T. Johnsen and H. Verdure are with the Department
of Mathematics and Statistics, UiT The Arctic University of Norway, 9037 Troms\o, Norway. The authors are grateful to Sudhir Ghorpade, IIT-Bombay for being a valuable discussion partner as the work proceeded,  and for the great hospitality, which provided optimal conditions for making the this article possible. \newline This work was partially supported with a joint grant from RCN, Norway (Project number 280731), and DST, India.\newline This work was also partially supported by the project Pure Mathematics in Norway, funded by Bergen Research Foundation and Troms{\o} Research Foundation}

\subjclass[2000]{05E45, 94B05, 05B35, 13F55}
\date{\today}

\begin{abstract}
We study $q$-ary linear codes $C$ obtained from  Veronese surfaces over finite fields.
We show how one can find the higher weight spectra of these codes, or equivalently, the
weight distribution of all extension codes of $C$ over all field extensions of $\Fq$. 
Our methods will be a study of the Stanley-Reisner rings of a series of matroids associated to each code $C$.
\end{abstract}

\maketitle

\section{Introduction}

Projective Reed-M\"uller codes is a class of error-correcting codes that has attracted much attention over the last decades.
To find the code parameters, including the generalized Hamming weights, has been a difficult task, and important results concerning this, have appeared quite recently. To find the higher weight spectra of such codes is more difficult, when the order of the Reed-M\"uller codes is higher than one, and to our knowledge there are few results about  this. Therefore it is natural to start with the simplest projective Reed-M\"uller codes of order at least $2$, namely the so-called Veronese codes $C_q$ over any finite field $\Fq$, where the $n=q^2+q+1$ columns of the generator matrix $G_q$ correspond to the points of $\mathbb{P}^2$. Moreover each row  is obtained by  taking an element of a basis for the vector space of all homogeneous polynomials of degree $2$ in $3$ variables, and evaluating it at the points of $\mathbb{P}^2$ (in some fixed order). Since this vector space has dimension $6$, there will be $6$ such rows.
Alternatively one could think of the columns of $G_q$ as the point of the $2$-uple Veronese embedding of $\mathbb{P}^2$ in
$\mathbb{P}^5.$  This is why we call these codes Veronese codes; since they in the way described correspond to the projective system of points of the mentioned Veronese surface (of degree $4$ in $\mathbb{P}^5$).

In this article, we are interested in computing the higher weight spectra, that is the number of subcodes of given dimension and weight of $C_q$.

The code $C_2$ is MDS and of dimension $6$ and length $7$, while the code $C_3$ of dimension  $6$ and length $13$ is more interesting, and it differs both from $C_2$, and from the codes $C_q$, for $q \ge 4$, concerning the aspects we study here. We determine the higher weight spectra and  the generalized weight polynomials for both codes.  For the codes $C_q$, with $q \geqslant 4$, we give a unified treatment, and determine both their higher weight spectra and their generalized weight polynomials.  All elements of the weight spectra,
and all coefficients of the generalized weight polynomials, turn out to be polynomials in $q$, with coefficients $\frac{a}{b}$,
where $a$ is an integer, and $b$ an integer dividing $24$. 

Our methods will consist of finding the $\mathbb{N}$-graded resolutions of the Stanley-Reisner rings of a series of matroids derived from the parity check matroid $M_q$ of each code. The $\mathbb{N}$-graded Betti numbers of these resolutions will give us the generalized weight polynomials $P_j(Z)$ that calculate the usual weight distribution of all extension codes of the $C_q$ over field extensions of $\Fq$.
Finally a straightforward and well known conversion formula will, from the knowledge of the $P_j(Z)$,  give us the higher weight spectra of the original codes $C_q$ that we study.

\section{Definitions and notation}
Let $q$ be a prime power and let $\nu_q$ be the Veronese  map that maps $\mathbb{P}^2$ into $\mathbb{P}^5$ over $\Fq$, i.e.
$(x,y,z)$ is mapped to  $(x^2,xy,xz,y^2,yz,z^2),$ and let $V_q$ be the image,  a non-degenerate smooth surface of degree $4$. The cardinality $|V|$ of $V$ is  $|\mathbb{P}^2|= q^2+q+1$. Fix some order for the points of 
$V$, and for each such point, fix a coordinate $6$-tuple that represents it.
Let $G_q$ be the $(6 \times (q^2+q+1))-$ matrix, whose columns are the coordinate $6$-tuples  of the points of $V$, taken
in the order fixed.
\begin{definition}\label{def:Veronese}
The Veronese code $C_q$ is the linear $[q^2+q+1,6]_q$-code with generator matrix $G_q$.
\end{definition}

For $q=2$ we thus get a $[7,6]$-code $C_2$, and it is well known, for example by looking at its dual code,
which is generated by a single code word with no zeroes (\cite{So}), that this is an MDS-code, and then 
all we are interested to know about this code is well known (A more straightforward method is of course just to calculate all $64$ codewords, and check that there is no such word with weight $1$).   From now on we will assume that $q \ge 4$, and we will give a common description of the $C_q$ for all these $q$.
We will return to the cases
$q=2$ and $3$ first in Section \ref{2and3}, where we will comment on,  and give the relevant results for these two cases. 

\subsection{Hamming weights, spectra and generalized weight polynomials}

\begin{definition}\label{def:support}
Let $C$ be a $[n,k]$ linear code over $\Fq$. Let $\bm{c} = (c_1,\cdots,c_n) \in C$. The Support of $\bm{c}$ is the set \[Supp(\bm{c}) = \{ i \in \{1,\cdots,n\}: c_i \neq 0\}.\] Its weight is \[wt(\bm{c}) = |Supp(\bm{c})|.\] Similarly, if $T \subset C$, then its support and weight are \[Supp(T) = \bigcup_{\bm{c} \in T} Supp(\bm{c}) \textrm{ and } wt(T) = |Supp(T)|.\]
\end{definition}
Important invariants of a code are the generalized Hamming weights, introduced by Wei in~\cite{W}: 
\begin{definition}\label{def:hamming}
Let $C$ be a $[n,k]$ linear code over $\Fq$. Its generalized Hamming weights are \[d_i = \min\{wt(D): D \subset C\textrm{ is a subcode of dimension }i\}\] for $1 \leqslant i \leqslant k$.
\end{definition}
We also have 
\begin{definition}\label{def:spectra}
Let $C$ be a $[n,k]$ linear code over $\Fq$. For $1\leqslant w \leqslant n$ and $1 \leqslant d \leqslant k$, the higher weight spectra of $C$ are \[A_w^{(r)} = \left|\{D: D \textrm{ subcode of }C\textrm{ of dimension }r \textrm{ and weight }w\}\right|.\]
\end{definition}
In particular, we have \[d_r= \min\{w:\ A_w^{(r)} \neq 0\}.\]
In~\cite{JP}, Jurrius and Pellikaan show that the number of codewords of a given code extended to a field extension of a given weight can be expressed by polynomials (the generalized weight polynomials). More precisely, if $C$ is a $[n,k]$-code over $\Fq$, then the code $C^{(i)}=C \otimes_{\Fq} \F{q^i}$ for $i \geqslant 1$ is a $[n,k]$ code over $\F{q^i}$. Any generator/parity check matrix of $C$ is a generator/parity check matrix of $C^{(i)}$. Then \begin{theorem}\label{th:Jurrius} Let $C$ be a $(n,k)$-code over $\Fq$. Then, there exists polynomials $P_w \in \Z[Z]$ for $0\leqslant w \leqslant n$ such that \[\forall i \geqslant 1, P_w(q^i) = \left|\left\{ \bm{c} \in C^{(i)}: wt(\bm{c}) = w\right\}\right|.\] 
\end{theorem}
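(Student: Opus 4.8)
The plan is to obtain each $P_w$ explicitly by a Möbius-inversion argument over the Boolean lattice of coordinate positions, using that the dimensions of the ``shortened'' subcodes cut out by requiring codewords to vanish on a prescribed set of coordinates do not change under field extension.

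Write $[n] = \{1,\dots,n\}$ and, for $T \subseteq [n]$, $T^c = [n]\setminus T$. Fix a generator matrix $G$ of $C$ with entries in $\Fq$, and for $S \subseteq [n]$ let $G_S$ be the column submatrix of $G$ indexed by $S$ and $r(S) = \rank G_S$, the rank function of the matroid represented by $G$. First I would record the one substantive observation: for every extension $\mathbb{F}_{q^i}/\Fq$ and every $S \subseteq [n]$, the subcode $C^{(i)}(S) := \{\bm c \in C^{(i)} : c_j = 0 \text{ for all } j \in S\}$ equals $\{\bm x G : \bm x \in \mathbb{F}_{q^i}^{k},\ \bm x G_S = \bm 0\}$, whence
\[
\dim_{\mathbb{F}_{q^i}} C^{(i)}(S) \;=\; k - \rank_{\mathbb{F}_{q^i}} G_S \;=\; k - r(S),
\]
the second equality because the rank of a matrix over $\Fq$ is detected by vanishing of minors and so is unaffected by enlarging the field. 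Therefore $\bigl|C^{(i)}(S)\bigr| = (q^i)^{\,k - r(S)}$, depending on $i$ only through $q^i$.

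Next, for $T \subseteq [n]$ the codewords of $C^{(i)}$ with support contained in $T$ are exactly those vanishing on $T^c$, so there are $(q^i)^{\,k - r(T^c)}$ of them. Letting $N_T^{(i)}$ be the number of $\bm c \in C^{(i)}$ with $Supp(\bm c) = T$, we have $\sum_{T'\subseteq T} N_{T'}^{(i)} = (q^i)^{\,k - r(T^c)}$, so Möbius inversion over the Boolean lattice gives
\[
N_T^{(i)} \;=\; \sum_{T' \subseteq T} (-1)^{|T| - |T'|}\, (q^i)^{\,k - r((T')^c)},
\]
and summing over the $T$ of size $w$ yields
\[
\bigl|\{\bm c \in C^{(i)} : wt(\bm c) = w\}\bigr| \;=\; \sum_{|T| = w}\ \sum_{T' \subseteq T} (-1)^{|T| - |T'|}\, (q^i)^{\,k - r((T')^c)}.
\]
This is precisely the value at $Z = q^i$ of
\[
P_w(Z) \;:=\; \sum_{|T| = w}\ \sum_{T' \subseteq T} (-1)^{|T| - |T'|}\, Z^{\,k - r((T')^c)},
\]
which lies in $\Z[Z]$ because every exponent $k - r((T')^c)$ is a nonnegative integer (it is the dimension of a subcode). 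Thus the $P_w$ exist, uniformly in $i \geq 1$; they are moreover unique, being determined by their values at the infinitely many integers $q, q^2, q^3, \dots$.

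There is no genuine obstacle here: the whole content is the field-independence of $r(S) = \rank G_S$ used in the first step --- equivalently, that $G$ represents the same matroid over $\Fq$ as over each of its extensions --- after which everything is formal inclusion--exclusion. I would only remark that the resulting $P_w$ can be reorganized through the Whitney rank generating function (or the Tutte polynomial) of that matroid, which is the viewpoint that later makes the passage to $\mathbb{N}$-graded Stanley--Reisner resolutions natural; but that reformulation is not needed for the present existence statement.
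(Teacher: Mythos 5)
Your argument is correct. Note that the paper itself gives no proof of this statement --- it is quoted from Jurrius and Pellikaan \cite{JP} --- and your M\"obius-inversion derivation via the field-independence of the rank function $r(S)=\rank G_S$ is essentially the standard argument behind the extended weight enumerator in that reference (equivalently, the Tutte/Whitney rank polynomial viewpoint you mention at the end). The only hypothesis worth making explicit is that $G$ has full row rank $k$, so that $\bm x \mapsto \bm x G$ is injective and $\dim_{\mathbb{F}_{q^i}} C^{(i)}(S) = k - \rank_{\mathbb{F}_{q^i}} G_S$; with that, every step checks out.
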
 In~\cite{J}, Jurrius gives a relation between the higher weight spectra and the polynomials defined above, namely
\begin{theorem}\label{th:relspecpol}Let $C$ be a $[n,k]$ code over $\Fq$. Let $0\leqslant w \leqslant n$. Then \[P_w(q^m) = \sum_{r=0}^m A_w^{(r)} \prod_{i=0}^{r-1}(q^m-q^i).\]
\end{theorem}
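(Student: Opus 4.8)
The plan is to compute $P_w(q^m)$, which by Theorem~\ref{th:Jurrius} equals the number of weight-$w$ codewords of the extension code $C^{(m)}=C\otimes_{\Fq}\Fqm$, by ``unpacking'' each such codeword into an $m$-tuple of codewords of $C$ over the ground field and then reorganizing the count according to the $\Fq$-span of that tuple. Concretely, I would fix an $\Fq$-basis $\alpha_1,\dots,\alpha_m$ of $\Fqm$. Every $\bm{c}=(c_1,\dots,c_n)\in C^{(m)}$ has a unique expansion $\bm{c}=\sum_{j=1}^m\alpha_j\bm{c}_j$ with $\bm{c}_j\in\Fq^{\,n}$, and if $H$ is a parity check matrix of $C$ (which has entries in $\Fq$ and, as recalled above, is also a parity check matrix of $C^{(m)}$), then $0=H\bm{c}^{\top}=\sum_{j=1}^m\alpha_j H\bm{c}_j^{\top}$ forces $H\bm{c}_j^{\top}=0$ for every $j$ by linear independence of the $\alpha_j$ over $\Fq$; hence $\bm{c}_j\in C$. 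Conversely every $(\bm{c}_1,\dots,\bm{c}_m)\in C^m$ gives $\sum_j\alpha_j\bm{c}_j\in C^{(m)}$, so $\bm{c}\mapsto(\bm{c}_1,\dots,\bm{c}_m)$ is a bijection from $C^{(m)}$ onto $C^m$.

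Next I would check that this bijection tracks weights correctly. The coordinate $c_i=\sum_{j=1}^m\alpha_j(\bm{c}_j)_i$ vanishes if and only if $(\bm{c}_j)_i=0$ for all $j$, so $Supp(\bm{c})=\bigcup_{j=1}^m Supp(\bm{c}_j)$; since the support of a set of words equals the support of its $\Fq$-linear span (Definition~\ref{def:support}), this is $Supp(D)$ where $D=\langle\bm{c}_1,\dots,\bm{c}_m\rangle_{\Fq}\subseteq C$. Hence $wt(\bm{c})=wt(D)$, and therefore
\[
P_w(q^m)=\bigl|\{\bm{c}\in C^{(m)}:wt(\bm{c})=w\}\bigr|=\bigl|\{(\bm{c}_1,\dots,\bm{c}_m)\in C^m:wt(\langle\bm{c}_1,\dots,\bm{c}_m\rangle)=w\}\bigr|.
\]

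I would then partition the set on the right by the subcode $D=\langle\bm{c}_1,\dots,\bm{c}_m\rangle$ that the tuple spans. For a fixed $D\subseteq C$ with $\dim D=r$, the $m$-tuples in $C^m$ spanning exactly $D$ are the ordered $m$-tuples of elements of $D$ that span $D$; writing their entries in a basis of $D$ identifies these with the $r\times m$ matrices over $\Fq$ of rank $r$, whose number, by transposition, is the number of ordered linearly independent $r$-tuples in $\Fq^{\,m}$, namely $\prod_{i=0}^{r-1}(q^m-q^i)$ (choose the vectors one at a time). Summing over all subcodes of weight $w$ and grouping by $r=\dim D$ — with the conventions that the empty product is $1$ and that $\{0\}$ is the unique subcode of dimension $0$, of weight $0$ — gives
\[
P_w(q^m)=\sum_{r\geqslant 0}A_w^{(r)}\prod_{i=0}^{r-1}(q^m-q^i).
\]
Finally, for $r>m$ the product contains the factor $q^m-q^m=0$, and for $r>\dim C$ one has $A_w^{(r)}=0$; in either case the term vanishes, so the sum may be truncated at $r=m$, which is exactly the asserted identity.

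I expect the only genuinely delicate point to be the weight bookkeeping of the second step: one must be sure that $Supp(\bm{c})$ computed over $\Fqm$ is exactly the union of the $Supp(\bm{c}_j)$, independently of the chosen basis $\alpha_1,\dots,\alpha_m$, as this is precisely what makes the two enumerations agree. The count of spanning $m$-tuples of an $r$-dimensional $\Fq$-space is classical, and the final truncation to $r\leqslant m$ is immediate from the vanishing factor $q^m-q^m$.
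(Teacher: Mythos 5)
Your argument is correct. Note that the paper does not prove this statement at all --- it is quoted from Jurrius~\cite{J} --- so there is nothing internal to compare against; but your proof is the standard one and each step checks out: the basis expansion $\bm{c}=\sum_j\alpha_j\bm{c}_j$ does give a weight-preserving bijection $C^{(m)}\to C^m$ (with $wt(\bm{c})=wt(\langle\bm{c}_1,\dots,\bm{c}_m\rangle)$ because the support of a set of words equals that of its span), the number of $m$-tuples spanning a fixed $r$-dimensional subcode is indeed $\prod_{i=0}^{r-1}(q^m-q^i)$, and the truncation at $r=m$ (and at $r=k$) is handled correctly by the vanishing factor $q^m-q^m$ and by $A_w^{(r)}=0$ for $r>k$.
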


\subsection{Matroids, resolutions and elongations}

Our goal in this paper is to find the higher weight spectra for the Veronese codes $C_q$ for $q \geqslant 3$. In order to do this, we will compute the higher weight polynomials of the code, making use of some machinery related to matroids associated to the code and their Stanley-Reisner resolutions.

There are many equivalent definitions of a matroid. We refer to~\cite{Oxley} for a deeper study of the theory of matroids. 

\begin{definition}\label{def:matroid}
A matroid is a pair $(E,\I)$ where $E$ is a finite set and $\I$ is a set of subsets of $E$ satisfying \begin{itemize}
\item[($R_1$)] $\emptyset \in \I$
\item[($R_2$)] If $I \in \I$ and $J \subset I$, then $J \in \I$
\item[($R_3$)] If $I,J \in \I$ and $|I| < |J|$, then $\exists j \in J \- I$ such that $I \cup \{j\} \in \I.$
\end{itemize}

The elements of $\I$ are called independent sets. The subsets of $E$ that are not independent are called dependent sets, and inclusion minimal dependent sets are called circuits.

For any $X \subset E$, its rank is \[r(X) = \max\{|I|: I \in \I,\ I \subset X\}\] and its nullity is $n(X)=|X|-r(X)$. The rank of the matroid is $r(M)=r(E)$. Finally, for any $0 \leqslant i \leqslant |E|-r(M)$, \[N_i = n^{(-1)}(i).\]
\end{definition}

If $C$ is a $[n,k]$-linear code given by a $(n-k) \times k$ parity check matrix $H$, then we can associate to it a matroid $M_C=(E,\I)$, where $E=\{1,\cdots ,n\}$ and $X \in \I$ if and only if the columns of $H$ indexed 
by $X$ are linearly independent over $\Fq$. It can be shown that this matroid is independent of the choice of the parity check matrix of the code. In the sequel, we denote by $M_q$ the matroid associated to the Veronese code $C_q$.

By axioms $(R_1)$ and $(R_2)$, any matroid $M=(E,\I)$ is also a simplicial complex on $E$. Let $\K$ be a field. We can associate to $M$  a monomial ideal $I_M$ in $R=K[\{X_e\}_{e \in E}]$ defined by \[I_M =< \bm{X}^\sigma: \sigma \not \in \I>\] where $\bm{X}^\sigma$ is the monomial product of all $X_e$ for $e \in \sigma$. This ideal is called the Stanley-Reisner ideal of $M$ and the quotient $R_M=R/I_M$ the Stanley-Reisner ring associated to $M$. We refer to~\cite{HH} for the study of such objects. As described in \cite{JV} the Stanley-Reisner ring has minimal $\N$ and $\N^n$-graded free resolutions \[0 \leftarrow R_M \leftarrow R \leftarrow \bigoplus_{j \in \N}R(-j)^{\beta_{1,j}} \leftarrow \bigoplus_{j \in \N}R(-j)^{\beta_{2,j}} \leftarrow \cdots \leftarrow \bigoplus_{j \in \N}R(-j)^{\beta_{|E|-r(M),j}} \leftarrow 0\] and \[0 \leftarrow R_M \leftarrow R \leftarrow \bigoplus_{\alpha \in \N^n}R(-\alpha)^{\beta_{1,\alpha}} \leftarrow \bigoplus_{\alpha \in \N^n}R(-\alpha)^{\beta_{2,\alpha}} \leftarrow \cdots \leftarrow \bigoplus_{\alpha \in \N^n}R(-\alpha)^{\beta_{|E|-r(M),\alpha}} \leftarrow 0.\] 

 In particular the numbers $\beta_{i,j}$ and $\beta_{i,\alpha}$ are independent of the minimal free resolution,
 (and for a matroid also of the field $\K$) and are called respectively the $\N$-graded and $\N^n$-graded Betti numbers of the matroid.
 We have \[\beta_{i,j} = \sum_{wt(\alpha)=j}\beta_{i,\alpha}.\] We also note that $\beta_{0,0}=1$.
 
It is well known that the independent sets of a matroid constitute a shellable simplicial complex. Hence  the ring $R_M$ is Cohen-Macaulay,
and the length $\min \{i | \beta_{i,j} \ne 0,$ for some $j\}$ is $n-r(M)$ by the Auslander-Buchsbaum formula (\cite{AB}).
When $M=M_C$ is associated to the parity check matroid of a linear code of dimension $k$, this length is then $n-(n-k)=k$. 

Moreover, we have, as a direct consequence of  
a more general result (One assumes that $I \subset S$ is a graded ideal such that $R = S/I$ is Cohen–
Macaulay and let $k=projdim(R)$) by Peskine and Szpiro, given in \cite{PS}:

\begin{theorem}\label{th:HK} Let $M$ be a matroid of rank $r=n-k$ on a set of cardinality $n$. Then the $\N$-graded Betti numbers of $R_M$ satisfy the equations
\begin{equation}  \label{HKBS}
\sum_{i=0}^k\sum_{j=0}^n(-1)^ij^s\beta_{i,j}=0,
\end{equation}
for $0 \leqslant s \leqslant k-1,$ where by convention, $0^0=1$.
\end{theorem}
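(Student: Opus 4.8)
The plan is to derive the stated equations from the fact that $R_M$ is Cohen--Macaulay of projective dimension $k$ by examining the Hilbert series of $R_M$ coming from its minimal $\N$-graded free resolution. First I would recall that, since the resolution
\[0 \leftarrow R_M \leftarrow R \leftarrow \bigoplus_{j}R(-j)^{\beta_{1,j}} \leftarrow \cdots \leftarrow \bigoplus_{j}R(-j)^{\beta_{k,j}} \leftarrow 0\]
is exact, the Hilbert series of $R_M$ is the alternating sum of the Hilbert series of the free modules, so that
\[H_{R_M}(t) = \frac{\sum_{i=0}^{k}\sum_{j=0}^{n}(-1)^i \beta_{i,j}\, t^j}{(1-t)^n}.\]
Write $K(t) = \sum_{i,j}(-1)^i\beta_{i,j}t^j$ for the numerator polynomial. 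The content of the theorem is precisely that the polynomial $K(t)$ vanishes to order at least $k$ at $t=1$; indeed, $\sum_{i,j}(-1)^i j^s \beta_{i,j}$ is, up to combinatorial factors, the $s$-th derivative data of $K$ at $t=1$, and a polynomial has a zero of order $\geqslant k$ at $t=1$ if and only if $K(1)=K'(1)=\cdots=K^{(k-1)}(1)=0$, which is an invertible linear reorganization of the conditions $\sum_{i,j}(-1)^i j^s\beta_{i,j}=0$ for $0\leqslant s\leqslant k-1$.

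Next I would explain why $K(t)$ has a zero of order at least $k$ at $t=1$. Since $M$ is a matroid, its independent-set complex is Cohen--Macaulay over $\K$ (shellability, as noted in the excerpt), so $R_M$ is a Cohen--Macaulay ring of Krull dimension $r = n-k$. Hence $H_{R_M}(t)$ can also be written as $Q(t)/(1-t)^{r}$ for some polynomial $Q(t)\in\Z[t]$ with $Q(1)\neq 0$ (the $h$-polynomial of the complex, with $Q(1)$ equal to the degree/multiplicity of $R_M$, which is positive). Comparing the two expressions for $H_{R_M}(t)$ gives
\[\frac{K(t)}{(1-t)^n} = \frac{Q(t)}{(1-t)^{r}}, \quad\text{so}\quad K(t) = (1-t)^{\,n-r}\,Q(t) = (1-t)^{k}\,Q(t).\]
Thus $K(t)$ is divisible by $(1-t)^k$, i.e. it vanishes to order at least $k$ at $t=1$, which is exactly what we needed. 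Appealing to the cited result of Peskine and Szpiro \cite{PS} in place of this direct Hilbert-series argument is of course equally valid and is the route indicated in the statement.

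Finally I would translate the vanishing-to-order-$k$ statement into the explicit linear equations. Expanding $K(t)$ around $t=1$, the coefficient of $(t-1)^s$ in its Taylor expansion is $\frac{1}{s!}K^{(s)}(1)$, and $K^{(s)}(1) = \sum_{i,j}(-1)^i \beta_{i,j}\, j(j-1)\cdots(j-s+1)$. Since $\{j^s : 0\leqslant s\leqslant k-1\}$ and $\{j(j-1)\cdots(j-s+1) : 0\leqslant s\leqslant k-1\}$ span the same space of polynomials in $j$ of degree $<k$ (a unitriangular change of basis given by Stirling numbers), the conditions $K^{(s)}(1)=0$ for $0\leqslant s\leqslant k-1$ are equivalent to $\sum_{i=0}^{k}\sum_{j=0}^{n}(-1)^i j^s\beta_{i,j}=0$ for $0\leqslant s\leqslant k-1$, with the convention $0^0=1$ for the $s=0$ case. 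This completes the argument. The only genuinely delicate point is the justification that $Q(1)\neq 0$ — equivalently that the Krull dimension of $R_M$ is exactly $n-r(M)$ and not larger — but this is immediate from the Cohen--Macaulay property together with $\dim R_M = \dim M + 1 = r(M)$ for the independent-set complex of a matroid of rank $r(M)$; I would simply cite the Auslander--Buchsbaum computation already recorded in the excerpt.
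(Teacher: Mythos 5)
Your argument is correct, and it is genuinely different in character from what the paper does: the paper offers no proof at all, simply deducing the equations as a special case of the Peskine--Szpiro result for Cohen--Macaulay quotients (cf.\ also Boij--S{\o}derberg and Herzog--K\"uhl), whereas you supply the standard self-contained Hilbert-series derivation. Your route --- computing $H_{R_M}(t)=K(t)/(1-t)^n$ from the exact resolution, comparing with $Q(t)/(1-t)^{r(M)}$ coming from the Krull dimension $r(M)=n-k$ of the Stanley--Reisner ring of the independence complex, concluding $(1-t)^k\mid K(t)$, and then converting the vanishing of $K^{(s)}(1)$ for $0\leqslant s\leqslant k-1$ into the power-sum form via the unitriangular falling-factorial/monomial change of basis --- is exactly the classical proof of these equations, so the two approaches agree in substance even though the paper externalizes it to a citation. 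Two small remarks: for the direction actually needed (vanishing to order \emph{at least} $k$) you do not need $Q(1)\neq 0$ at all, only that $H_{R_M}(t)$ can be written with denominator $(1-t)^{r(M)}$, so the point you flag as delicate is dispensable; and the fact that the resolution terminates at homological degree $k$ (so that the sum over $i$ stops there) is where Cohen--Macaulayness plus Auslander--Buchsbaum is really used, the Krull-dimension computation $\dim R_M=\dim\Delta+1=r(M)$ being an independent elementary fact about Stanley--Reisner rings rather than a consequence of Auslander--Buchsbaum.
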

See also \cite[Equation (2.1)]{BS}  and \cite{HeK}.
The $k$ equations (\ref{HKBS}) from Theorem \ref{th:HK} are frequently called the Herzog-K\"uhl equations.
\begin{remark} \label{vandermonde}
For a matroid $M$ we define $\phi_j(M)=\sum_{i=0}^k(-1)^i\beta_{i,j}. $
Then the Herzog-K\"uhl equations can be written:
$$\sum_{j=0}^nj^s\phi_j(M)=0,$$
and it is clear that these equations are independent in the variables $\phi_j(M)$ with a Vandermonde coefficient matrix.
\end{remark}

Also, as explained in~\cite[Theorem 1]{JV}, we can compute the $\N^n$-graded betti number $\beta_{i,\alpha}$ as the Euler characteristic of a certain matroid. If $M$ is a matroid and $\sigma$ is a subset of the ground set $E$, then $M_\sigma$ is the matroid with independent sets \[\I(M_\sigma)= \left\{ \tau \in \I(M): \tau \subset \sigma\right\}.\] Moreover, the Euler characteristic of $M$ is \begin{eqnarray*}\chi(M) &=& \sum_{i=0}^{|E|} (-1)^{i-1} \left|\left\{\tau \subset E: |\tau| = i \textrm{ and } \tau \not\in \I\right\}\right| \\&=& \sum_{i=0}^{|E|} (-1)^i \left|\left\{\tau \subset E: |\tau| = i \textrm{ and } \tau \in \I\right\}\right|\end{eqnarray*} \begin{theorem}\label{th:Euler}
Let $M$ be a matroid on the ground set $E$. Let $\sigma \subset E$. Then \[\beta_{n(\sigma),\sigma}= (-1)^{r(\sigma)-1}\chi(M_\sigma).\] In particular, for any circuit $\sigma$, $\beta_{1,\sigma}=1$.
\end{theorem}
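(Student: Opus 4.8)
The plan is to combine the Hochster-type description of the multigraded Betti numbers of a Stanley--Reisner ring with the classical fact that the independence complex of a matroid is shellable. For a fixed $\sigma\subseteq E$, note first the elementary point that the restriction $M_\sigma$ is again a matroid, on ground set $\sigma$, of rank $r(\sigma)$, and that its complex of independent sets is precisely the restriction to $\sigma$ of the independence complex of $M$. Hochster's formula --- which in the present setting is the content of \cite[Theorem~1]{JV}, see also \cite{HH} --- then gives, for every homological degree $i$,
\[ \beta_{i,\sigma}(R_M)\;=\;\dim_{\K}\widetilde H_{\,|\sigma|-i-1}\bigl(M_\sigma;\K\bigr), \]
where $\widetilde H_\bullet$ denotes reduced simplicial homology of the independence complex of $M_\sigma$.

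The crucial step is then shellability: a matroid independence complex is shellable, so $M_\sigma$ is Cohen--Macaulay and pure of dimension $r(\sigma)-1$, and therefore $\widetilde H_j(M_\sigma;\K)=0$ for all $j\ne r(\sigma)-1$. Feeding this into Hochster's formula, $\beta_{i,\sigma}(R_M)$ vanishes unless $|\sigma|-i-1=r(\sigma)-1$, i.e. unless $i=|\sigma|-r(\sigma)=n(\sigma)$; hence $\beta_{n(\sigma),\sigma}$ is the only Betti number that can be nonzero in multidegree $\sigma$, and it equals $\dim_{\K}\widetilde H_{r(\sigma)-1}(M_\sigma;\K)$. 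Since the reduced homology of $M_\sigma$ is concentrated in the single degree $r(\sigma)-1$, this dimension is $(-1)^{r(\sigma)-1}$ times the reduced Euler characteristic of the complex; rewriting the latter through the quantity $\chi(M_\sigma)=\sum_{\tau\in\I(M_\sigma)}(-1)^{|\tau|}$ introduced above, with the empty face handled by the convention $0^0=1$, produces the claimed identity $\beta_{n(\sigma),\sigma}=(-1)^{r(\sigma)-1}\chi(M_\sigma)$.

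For the last assertion, if $\sigma$ is a circuit then every proper subset of $\sigma$ is independent while $\sigma$ is not, so the independence complex of $M_\sigma$ is the boundary of the simplex on $\sigma$, a sphere of dimension $|\sigma|-2$; hence $\widetilde H_j(M_\sigma;\K)$ equals $\K$ for $j=|\sigma|-2$ and vanishes otherwise, and since a circuit has $r(\sigma)=|\sigma|-1$ and thus $n(\sigma)=1$, the preceding paragraph yields $\beta_{1,\sigma}=1$. I expect the only genuinely substantive ingredient to be the vanishing of all reduced homology of $M_\sigma$ outside its top degree --- that is, the shellability of matroid independence complexes; the remaining difficulty is purely sign- and index-bookkeeping among Hochster's formula, reduced homology, the resolution normalized by $\beta_{0,0}=1$, and the chosen sign convention for $\chi$, and pinning down the precise power of $-1$ is the step most prone to slips, with the circuit case serving as a convenient consistency check.
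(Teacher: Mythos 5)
Your approach is the standard one, and it is essentially the proof behind the cited source: the paper itself does not prove this theorem but imports it from \cite[Theorem 1]{JV}, so you are supplying the argument the paper delegates. The two ingredients you isolate --- Hochster's formula $\beta_{i,\sigma}(R_M)=\dim_{\K}\widetilde H_{|\sigma|-i-1}(M_\sigma;\K)$ and the shellability (hence top-degree concentration of reduced homology) of the independence complex of the restriction $M_\sigma$, which is pure of dimension $r(\sigma)-1$ --- are exactly right, and they correctly give that $\beta_{i,\sigma}$ vanishes for $i\neq n(\sigma)$ while $\beta_{n(\sigma),\sigma}=\dim_{\K}\widetilde H_{r(\sigma)-1}(M_\sigma;\K)$. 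Your direct computation in the circuit case ($M_\sigma$ is the boundary of a simplex, a $(|\sigma|-2)$-sphere, so $\beta_{1,\sigma}=1$) is also correct.

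The one place where your write-up papers over a real issue is the final sign conversion, and the consistency check you yourself propose detects it. With the paper's convention $\chi(M_\sigma)=\sum_{\tau\in\I(M_\sigma)}(-1)^{|\tau|}$ one has $\chi(M_\sigma)=-\widetilde\chi(M_\sigma)$ (the empty face contributes $+1$ to $\chi$ but $(-1)^{-1}=-1$ to $\widetilde\chi$), so concentration of homology in degree $r(\sigma)-1$ yields $\dim_{\K}\widetilde H_{r(\sigma)-1}=(-1)^{r(\sigma)-1}\widetilde\chi(M_\sigma)=(-1)^{r(\sigma)}\chi(M_\sigma)$, not $(-1)^{r(\sigma)-1}\chi(M_\sigma)$. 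For a circuit of size $c$ the displayed formula as stated gives $(-1)^{c-2}\cdot(-1)^{c+1}=-1$, contradicting the ``in particular'' clause; so the discrepancy lies in the paper's stated sign (or, equivalently, in its sign convention for $\chi$), not in your homological argument. You should state the corrected sign explicitly rather than assert that the bookkeeping ``produces the claimed identity.'' This has no downstream consequences in the paper, which only ever uses $|\chi(M_\sigma)|$ (as in the computation of $\beta_{2,q^2-1}^{(0)}$).
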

In~\cite{JV} generally for matroids, and in particular for matroids associated to codes, we show that: 
\begin{theorem}\label{thm:AAECC}
Let $C$ be a $[n,k]$-code over $\Fq$. The $\N$-graded Betti numbers of the matroid $M_C$ satisfy:  $\beta_{i,j} \neq 0$ if and only if there exists an inclusion minimal set in $N_i$ of cardinality $j$. In particular, $d_i = \min\{j: \beta_{i,j} \neq 0\}$.
\end{theorem}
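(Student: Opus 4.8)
The plan, following the line of \cite{JV}, is to pass from the $\N$-graded to the $\N^n$-graded Betti numbers and then apply Hochster's formula together with the Cohen--Macaulayness of matroid complexes. Since $I_{M_C}$ is a squarefree monomial ideal, its minimal free resolution is supported in squarefree multidegrees, so $\beta_{i,j}=\sum_{\sigma\subseteq E,\ |\sigma|=j}\beta_{i,\sigma}$ is a sum of nonnegative integers; hence $\beta_{i,j}\neq 0$ exactly when $\beta_{i,\sigma}\neq 0$ for some $\sigma$ with $|\sigma|=j$. It therefore suffices to show that $\beta_{i,\sigma}\neq 0$ if and only if $n(\sigma)=i$ and $\sigma$ is inclusion minimal in $N_i$.

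By Hochster's formula, $\beta_{i,\sigma}=\dim_{\K}\tilde H_{|\sigma|-i-1}(M_\sigma;\K)$, where $M_\sigma$ is viewed as a simplicial complex. As the independent sets of a matroid form a shellable complex, $M_\sigma$ is Cohen--Macaulay, so its reduced homology is concentrated in the top degree $\dim M_\sigma=r(\sigma)-1$. Hence $\beta_{i,\sigma}$ can be nonzero only when $|\sigma|-i-1=r(\sigma)-1$, i.e.\ when $i=|\sigma|-r(\sigma)=n(\sigma)$; and for that value $\beta_{n(\sigma),\sigma}=\dim_\K\tilde H_{r(\sigma)-1}(M_\sigma;\K)$, which is, up to sign, the quantity $\chi(M_\sigma)$ of Theorem~\ref{th:Euler}. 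So $\beta_{i,\sigma}\neq 0$ if and only if $i=n(\sigma)$ and $\chi(M_\sigma)\neq 0$, and it remains to identify the subsets with $\chi(M_\sigma)\neq 0$ as precisely those that are inclusion minimal in $N_{n(\sigma)}$.

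I would split this last step in two. First, $\sigma$ is inclusion minimal in $N_{n(\sigma)}$ if and only if $M_\sigma$ has no coloop: a coloop $e$ of $M_\sigma$ satisfies $r(\sigma\-e)=r(\sigma)-1$, hence $n(\sigma\-e)=n(\sigma)$, so $\sigma$ is not minimal; conversely, if $\tau\subsetneq\sigma$ has $n(\tau)=n(\sigma)$, then since nullity is monotone and increases by at most one along each step of a maximal chain from $\tau$ to $\sigma$ it is constant along that chain, so $n(\sigma\-e)=n(\sigma)$ for some $e\in\sigma$, i.e.\ $e$ is a coloop of $M_\sigma$. Second, $\chi(M_\sigma)\neq 0$ if and only if $M_\sigma$ has no coloop. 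The ``only if'' part is easy: a coloop $e$ lies in no circuit, so $M_\sigma$ is a cone with apex $e$, hence contractible, forcing $\chi(M_\sigma)=0$. The ``if'' part --- a coloop-free matroid has nonzero reduced Euler characteristic --- is the step I expect to be the main obstacle. My plan is to prove, by induction on the rank, the lemma: for every coloop-free matroid $N$ one has $h(N):=\dim_\K\tilde H_{r(N)-1}(N;\K)>0$. The base case $r(N)=0$ is clear, since then the independence complex is $\{\emptyset\}$ and $\tilde H_{-1}(\{\emptyset\};\K)=\K$. For $r(N)\geq 1$, choose a non-loop $e$ of $N$, which is automatically a non-coloop; the closed star of $e$ in the independence complex of $N$ is the cone over the independence complex of $N/e$, so Mayer--Vietoris (together with contractibility of a cone and the ranks of $N\-e$ and $N/e$) gives $h(N)=h(N\-e)+h(N/e)$; and since a coloop of $N/e$ would be a coloop of $N$, the matroid $N/e$ is coloop-free of rank $r(N)-1$, so the inductive hypothesis gives $h(N/e)>0$ and hence $h(N)>0$. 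Combining the two equivalences proves the first claim of the theorem.

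For the equality $d_i=\min\{j:\beta_{i,j}\neq 0\}$, I would use that the codewords of $C$ supported on a set $\sigma$ form the kernel of the submatrix of the parity check matrix on the columns indexed by $\sigma$, a space of dimension $n(\sigma)$ in $M_C$. Hence $d_i=\min\{|\sigma|:n(\sigma)\geq i\}$, which by monotonicity of nullity equals $\min\{|\sigma|:n(\sigma)=i\}$; and a set of minimal cardinality with $n(\sigma)=i$ is inclusion minimal in $N_i$, so this value coincides with $\min\{j:\beta_{i,j}\neq 0\}$ by the first part.
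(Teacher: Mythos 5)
Your proposal is correct. Note that the paper itself does not prove Theorem~\ref{thm:AAECC} at all --- it is quoted from \cite{JV} --- so there is no internal proof to compare against; your argument is essentially a self-contained reconstruction of the one in that reference. The reduction via squarefree multidegrees and Hochster's formula, the concentration of homology of the shellable complex $M_\sigma$ in top dimension (forcing $i=n(\sigma)$), and the two equivalences ``$\sigma$ minimal in $N_{n(\sigma)}$ $\Leftrightarrow$ $M_\sigma$ coloop-free $\Leftrightarrow$ top homology nonzero'' are exactly where the content lies, and each step checks out: in particular your Mayer--Vietoris deletion--contraction recursion $h(N)=h(N\setminus e)+h(N/e)$ is valid because $e$ being a non-coloop makes $r(N\setminus e)=r(N)$, so Cohen--Macaulayness kills the term $\tilde H_{r-2}(IN(N\setminus e))$ and the sequence splits as you claim, and a coloop of $N/e$ is indeed a coloop of $N$, so the induction goes through. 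The closing identification $d_i=\min\{|\sigma|:n(\sigma)\ge i\}=\min\{|\sigma|:\sigma\text{ minimal in }N_i\}$ is also the standard argument (it is in effect Lemma~\ref{lem:Huguesliker} specialized). The only cosmetic caveat is the phrase ``a non-loop is automatically a non-coloop,'' which is true only because $N$ is assumed coloop-free --- worth saying explicitly, but not a gap.
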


\begin{definition}\label{def:elongation}
Let $M=(E,\I)$ be a matroid, with $|E|=n$, and let $l \geqslant 0$. Then,  the $l$-th elongation  of $M$ is the matroid $M^{(l)} = (E,\I^{(l)})$ with \[\I^{(l)} = \{ I \cup X: I \in \I, X \subset E,\ |X| \leqslant l\}.\]
\end{definition}
The $l$-th elongation of $M$ is a matroid of rank $\min\{n,r(M)+l\}$.

\begin{remark}
Another, equivalent, way of defining $M^{(l)}$, is: $M^{(l)}$ is the matroid with the same ground set $E$ as $M$, and with nullity function $n^{(l)}(X)=\max \{0,n(X)-l\},$ for each $X \subset E.$ 
\end{remark}
\begin{definition}
Let $N^{(l)}_i$ be the set of subsets $X$ of $E$ with
$n^{(l)}(X)=i.$
\end{definition}
The following result is trivial, but useful:
\begin{proposition} \label{use}
$N^{(l)}_i=N_{i+l},$ for
$i=0,\cdots,n-r(M)-l.$
In particular the inclusion minimal elements of $N^{(l)}_i$ are the same as the inclusion minimal elements of $N_{i+l}.$
\end{proposition}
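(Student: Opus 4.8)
The plan is to prove the statement by directly unwinding the two descriptions of the nullity function, so that essentially no input is needed beyond the remark immediately preceding it. Recall that $M^{(l)}$ is the matroid on $E$ whose nullity function is $n^{(l)}(X)=\max\{0,\,n(X)-l\}$ for every $X\subseteq E$, and that by definition $N^{(l)}_i=\{X\subseteq E:\ n^{(l)}(X)=i\}$, while $N_{i+l}=\{X\subseteq E:\ n(X)=i+l\}$.

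First I would fix $i$ in the displayed range and, for $i\geqslant1$, show that these two families coincide. If $X\in N^{(l)}_i$, then $n^{(l)}(X)=i\geqslant1>0$, so $\max\{0,\,n(X)-l\}=n(X)-l$, giving $n(X)-l=i$, hence $n(X)=i+l$ and $X\in N_{i+l}$. Conversely, if $X\in N_{i+l}$, then $n(X)=i+l\geqslant l+1>l$, so again $n^{(l)}(X)=n(X)-l=i$ and $X\in N^{(l)}_i$; thus $N^{(l)}_i=N_{i+l}$ as families of subsets of $E$. I would then observe that the index bound in the statement is the natural one: since $n(X)\leqslant|E|-r(M)=n-r(M)$ for every $X$, the index $i+l$ is always admissible for $N$, and $M^{(l)}$ has rank $\min\{n,\,r(M)+l\}$, so $N^{(l)}_i$ is defined exactly for $0\leqslant i\leqslant n-r(M)-l$, which matches the statement. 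The value $i=0$ is simply the remark that $N^{(l)}_0$ is the set of independent sets of $M^{(l)}$, namely the $X$ with $n(X)\leqslant l$.

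The ``in particular'' assertion is then immediate: equal families of subsets of $E$ have equal collections of inclusion-minimal members, so the inclusion-minimal elements of $N^{(l)}_i$ are precisely those of $N_{i+l}$. I do not expect any genuine obstacle, since the argument is a one-line manipulation; the only point that deserves a moment's care is the truncation $\max\{0,\cdot\}$ in the definition of $n^{(l)}$, which is exactly why the clean identification with $N_{i+l}$ is made for $i\geqslant1$, together with the routine verification that the stated range of $i$ is the one on which both sides are defined.
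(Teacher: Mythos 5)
Your argument is correct and is essentially the only one available: the paper offers no proof (it labels the result ``trivial''), and your direct unwinding of $n^{(l)}(X)=\max\{0,n(X)-l\}$ for $i\geqslant 1$ is exactly the intended verification. You are also right to isolate $i=0$: there the literal equality fails for $l\geqslant 1$, since $N^{(l)}_0=\{X: n(X)\leqslant l\}$ strictly contains $N_l=\{X: n(X)=l\}$ (already $\emptyset\in N^{(l)}_0\setminus N_l$), so this is a small imprecision in the paper's statement rather than a gap in your proof; the proposition is only ever invoked for $i\geqslant 1$ (via the Betti numbers $\beta^{(l)}_{i,j}$ with $i\geqslant 1$), where your argument settles it completely.
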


The main theorem of~\cite{JRV} gives an expression of the generalized weight polynomials of a code to the Betti numbers of its associated matroid and its elongations, namely:

\begin{theorem}\label{th:Jan} Let $C$ be a $[n,k]$ code over $\Fq$. We denote by $\beta_{i,j}^{(l)}$ the Betti numbers og the matroids $M_C^{(l)}$. Then, for every $ 0 \leqslant w \leqslant n$, 
\[P_w(Z) = \sum_{0 \leqslant l \leqslant k-1}\sum_{i\geqslant 0} (-1)^{i+1}\beta_{i,w}^{(l)}Z^l(Z-1).\]
\end{theorem}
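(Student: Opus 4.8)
The plan is to prove the identity of polynomials by checking it after the substitution $Z=q^m$, for every integer $m\geqslant1$. The right-hand side is manifestly a polynomial in $Z$ with integer coefficients, and Theorem~\ref{th:Jurrius} furnishes a polynomial $P_w$ with $P_w(q^m)=|\{\bm{c}\in C^{(m)}:wt(\bm{c})=w\}|$ for all $m\geqslant1$; since $q,q^2,q^3,\dots$ are infinitely many distinct arguments, this $P_w$ is unique and equality of two polynomials at all these points is equivalent to equality of the polynomials. I would also treat the degenerate value $w=0$ (for which $P_0=1$) directly and assume $w\geqslant1$ henceforth. The first main step is to count the weight-$w$ codewords of $C^{(m)}$ by their support. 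For $T\subseteq E=\{1,\dots,n\}$, the codewords of $C^{(m)}$ whose support lies in $T$ are exactly the solutions of the parity-check system obtained by keeping only the columns indexed by $T$; this solution space has $\Fqm$-dimension $|T|-r(T)=n(T)$, where $r$ and $n$ are the rank and nullity functions of $M_C$, so there are $q^{m\,n(T)}$ of them. Inclusion-exclusion over the Boolean lattice of supports then gives
\[P_w(q^m)=\sum_{\substack{S\subseteq E\\ |S|=w}}\ \sum_{T\subseteq S}(-1)^{|S|-|T|}q^{m\,n(T)}=(-1)^w\sum_{t\geqslant0}(-1)^t\binom{n-t}{w-t}\sum_{\substack{T\subseteq E\\ |T|=t}}q^{m\,n(T)},\]
the second expression obtained by reorganising the double sum by the inner set $T$.

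Next I would rewrite the right-hand side of the claimed formula. Fix $l$ with $0\leqslant l\leqslant k-1$. By Definition~\ref{def:elongation} and the remark following it, $M_C^{(l)}$ is the matroid on $E$ with nullity function $X\mapsto\max\{0,n(X)-l\}$, so a set $\tau\subseteq E$ is independent in $M_C^{(l)}$ precisely when $n(\tau)\leqslant l$. Moreover restriction and elongation commute, $(M_C^{(l)})_\sigma=(M_C)_\sigma^{(l)}$, since nullity on subsets of $\sigma$ is unchanged by restriction. Using the standard fact that for a matroid the multigraded Betti number $\beta_{i,\sigma}$ vanishes unless $i$ equals the nullity of $\sigma$ — so that Theorem~\ref{th:Euler} accounts for all of them — together with $n(\sigma)+r(\sigma)=|\sigma|$, I would collapse the alternating sum over $i$ to one Euler-characteristic term per subset $\sigma$ of size $w$ and expand the definition of $\chi$, obtaining for a global sign $\varepsilon_w$ depending only on $w$:
\[\sum_{i\geqslant0}(-1)^{i+1}\beta_{i,w}^{(l)}=\varepsilon_w\sum_{\substack{\sigma\subseteq E\\ |\sigma|=w}}\ \sum_{\substack{\tau\subseteq\sigma\\ n(\tau)\leqslant l}}(-1)^{|\tau|}.\]

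Then I would multiply by $Z^l(Z-1)$, sum over $0\leqslant l\leqslant k-1$, and interchange the order of summation. For a fixed pair $\tau\subseteq\sigma$, the values of $l$ that contribute are $l=n(\tau),\dots,k-1$ (note $0\leqslant n(\tau)\leqslant n(E)=k$), and $\sum_{l=n(\tau)}^{k-1}Z^l(Z-1)=Z^k-Z^{n(\tau)}$ telescopes. Hence the right-hand side of the theorem equals $\varepsilon_w\sum_{|\sigma|=w}\sum_{\tau\subseteq\sigma}(-1)^{|\tau|}\bigl(Z^k-Z^{n(\tau)}\bigr)$. Since $w\geqslant1$, the $Z^k$ part disappears because $\sum_{\tau\subseteq\sigma}(-1)^{|\tau|}=0$ for $\sigma\neq\emptyset$, and reorganising the surviving terms by $\tau=T$ and substituting $Z=q^m$ reproduces exactly the formula for $P_w(q^m)$ from the first step; this both establishes the identity and forces $\varepsilon_w=(-1)^{w+1}$.

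The hard part will be the bookkeeping in the middle step. One must use that, for the matroid $M_C^{(l)}$, only the ``diagonal'' multigraded Betti numbers $\beta_{n(\sigma),\sigma}$ are nonzero, so that the alternating sum over $i$ genuinely collapses to a single term, and one must carry the Euler-characteristic sign correctly through the elongation so that the double sum comes out as stated. I would deliberately leave the overall sign $\varepsilon_w$ undetermined until the final comparison with $P_w(q^m)$, rather than risk a slip in the intermediate sign arithmetic. A less direct alternative would be to pin down $P_w$ coefficient by coefficient from Theorems~\ref{th:Jurrius} and~\ref{th:relspecpol} together with the Herzog-K\"uhl equations of Remark~\ref{vandermonde}, but the support-counting argument above is the most transparent.
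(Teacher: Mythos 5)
The paper itself gives no proof of Theorem~\ref{th:Jan} --- it is simply imported from \cite{JRV} --- so your proposal is supplying an argument the paper omits, and the route you choose (count the weight-$w$ words of $C^{(m)}$ by inclusion--exclusion over supports to get $\sum_{|S|=w}\sum_{T\subseteq S}(-1)^{|S|-|T|}q^{m\,n(T)}$, collapse each $\sum_i(-1)^{i+1}\beta^{(l)}_{i,\sigma}$ to a single Euler-characteristic term, then telescope over $l$) is essentially the one in that reference. The individual steps are sound: the solution space of the punctured parity-check system has $\Fqm$-dimension $n(T)$ because the matroid of $H$ is unchanged under field extension; independence in $M_C^{(l)}$ is exactly the condition $n(\tau)\leqslant l$; restriction commutes with elongation; shellability of matroid complexes plus Hochster's formula gives the diagonal concentration $\beta_{i,\sigma}\neq 0\Rightarrow i=n(\sigma)$; and $\sum_{l=n(\tau)}^{k-1}Z^l(Z-1)=Z^k-Z^{n(\tau)}$ correctly absorbs the edge case $n(\tau)=k$, where no $l$ contributes.

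Two points must be tightened. First, you cannot ``force'' $\varepsilon_w$ from the identity you are trying to prove; that is circular. The sign has to be computed: $\sum_{i}(-1)^{i+1}\beta^{(l)}_{i,\sigma}=(-1)^{n^{(l)}(\sigma)+1}\beta^{(l)}_{n^{(l)}(\sigma),\sigma}$, and with $n^{(l)}(\sigma)+r^{(l)}(\sigma)=w$ this yields $\varepsilon_w=(-1)^{w+1}$ \emph{provided} the Euler-characteristic formula reads $\beta_{n(\sigma),\sigma}=(-1)^{r(\sigma)}\chi(M_\sigma)$ under the paper's convention $\chi(M)=\sum_{\tau\in\I}(-1)^{|\tau|}$ (equivalently, $(-1)^{r(\sigma)-1}$ times the \emph{reduced} Euler characteristic). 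Taken literally, Theorem~\ref{th:Euler} as printed gives $\varepsilon_w=(-1)^{w}$ and hence the wrong global sign; it is internally inconsistent in any case, since for a circuit $\sigma$ of size $c$ it would return $\beta_{1,\sigma}=(-1)^{c-2}(-1)^{c-1}=-1$, contradicting both the positivity of Betti numbers and the paper's own assertion that $\beta_{1,\sigma}=1$. So the honest sign computation is precisely where the remaining work lies, and it does close once that typo is corrected. Second, your exclusion of $w=0$ is necessary, not merely convenient: for $\sigma=\emptyset$ one has $\sum_{\tau\subseteq\sigma}(-1)^{|\tau|}=1$, so the $Z^k$ term survives and the displayed formula returns $1-Z^k$ rather than $P_0=1$. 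The identity as stated is false at $w=0$ (it is repaired by the version in Remark~\ref{reform} with the $l=k$ term included), so you should state explicitly that you prove it for $1\leqslant w\leqslant n$ and note that $w=0$ is an exception to the formula, not just to your argument.
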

\begin{remark} \label{reform}
{\rm The formula in Theorem~\ref{th:Jan} can also be written
\[P_w(Z)=\sum_{l\geqslant 0}\sum_{i\geqslant 0}(-1)^{i+1}(\beta_{i,w}^{(l-1)}-\beta_{i,w}^{(l)})Z^l.\]
Using Remark \ref{vandermonde} we see that this can be written:
\[P_w(Z)=\sum_{l\geqslant 0}(\phi_w(M^{(l)})-\phi_w(M^{(l-1)}))Z^l.\]
In any case the input in the formula of Theorem \ref{th:Jan} 
contains the output of the Herzog-K\"uhl equations for the various $M^{(l)}$ (when those equations are combined with sufficient other information to be solvable). Whether we want to use the set of all $\beta^l_{i,w}$ as this output/input, or are happy to use just the $\phi_w(M^{(l)})$, is a matter of taste or opportunity.
It is clear that if one knows all the $\beta^l_{i.w}$ for a fixed $w$, then one can derive all the $\phi_w(M^{(l)})$,
but the converse is not necessarily true. In this paper we choose to find all the $\beta^l_{i,w}$ in order to find all the $P_w(Z)$ since it is not not significantly more difficult than to find the weaker, but sufficient, information obtained from all the $\phi_w(M^{(l)})$.}
\end{remark}

\section{Main theorem}

We are now able to give our main theorem, namely the higher weight spectra of the Veronese codes. We give here the result for $q \geqslant 4$, as well as the steps of the proof. Later, we will give the results for the degenerate cases $q=2,3$.

\begin{theorem}\label{mainqbig} Let $q\geqslant 4$ and consider the Veronese code $C_q$. Then all the $A_{w}^{(r)}$ are $0$, with the following exceptions:

\[\begin{array}{cc} 
A_{q^2-q}^{(1)}=\frac{q^4+2q^3+2q^2+q}{2} & A_{q^2}^{(1)}=q^5+q+1 \\
A_{q^2+q}^{(1)}=\frac{q^4-q}{2} & A_{q^2-1}^{(2)}=q^4+q^3+q^2 \\
A_{q^2}^{(2)}=q^3+2q^2+2q+1 &  A_{q^2+q-3}^{(2)}=\frac{q^8-q^6-q^5+q^3}{24} \\
A_{q^2+q-2}^{(2)}= \frac{q^7+q^6-q^4-q^3}{2} & A_{q^2+q-1}^{(2)}= \frac{q^8+5q^6+7q^5+4q^4-q^3-4q^2}{4}\\
A_{q^2+q}^{(2)}= \frac{2q^8+3q^7+q^6+4q^5+9q^4+5q^3-6q}{6} & A_{q^2+q+1}^{(2)}=\frac{3q^8+q^6-3q^5-q^3}{8} \\
A_{q^2}^{(3)}=q^2+q+1 &  A_{q^2+q-2}^{(3)}= \frac{q^6+2q^5+2q^4+q^3}{6} \\ 
A_{q^2+q-1}^{(3)}= \frac{q^7+2q^6+3q^5+3q^4+2q^3+q^2}{2} & A_{q^2+q}^{(3)}= \frac{2q^8+2q^7+3q^6+2q^5+4q^4+3q^3+2q^2}{2} \\
A_{q^2+q+1}^{(3)}=\frac{6q^9+3q^7+2q^6+q^5-5q^4+2q^3-3q^2}{6} & A_{q^2+q-1}^{(4)}=\frac{q^4+2q^3+2q^2+q}{2} \\
A_{q^2+q}^{(4)}= q^6+2q^5+2q^4+q^3+q^2+q+1 & A_{q^2+q+1}^{(4)}=\frac{2q^8+2q^7+2q^6+q^4-q}{2}\\
A_{q^2+q}^{(5)}=q^2+q+1 & A_{q^2+q+1}^{(5)}=q^5+q^4+q^3 \\
A_{q^2+q+1}^{(6)}=1
\end{array}\]
\end{theorem}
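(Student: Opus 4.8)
### Proof strategy

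The plan is to follow the machinery set up in Section~2 essentially verbatim: compute, for every elongation $M_q^{(l)}$ with $0 \leqslant l \leqslant 5$, all the $\N$-graded Betti numbers $\beta_{i,w}^{(l)}$, feed them into Theorem~\ref{th:Jan} to obtain the generalized weight polynomials $P_w(Z)$, and then invert the relation in Theorem~\ref{th:relspecpol} to extract the higher weight spectra $A_w^{(r)}$. The inversion step is purely formal — given $P_w(q^m)$ for all $m$, the triangular system $P_w(q^m) = \sum_{r=0}^m A_w^{(r)}\prod_{i=0}^{r-1}(q^m-q^i)$ determines the $A_w^{(r)}$ uniquely — so the entire content is in producing the Betti tables of the six matroids $M_q, M_q^{(1)},\dots,M_q^{(5)}$ (recall $k=6$, so $M_q^{(6)}$ is the free matroid and contributes nothing).

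The first concrete step is a careful combinatorial analysis of the matroid $M_q$ itself, i.e.\ of the linear dependencies among the $n=q^2+q+1$ points of the Veronese surface $V_q \subset \Proj^5$. The key geometric input is the classical fact that the $2$-uple embedding of $\Proj^2$ has the property that a set of points of $V_q$ spans a subspace of $\Proj^5$ of dimension determined by how those points lie on conics in $\Proj^2$: specifically, $r(X)$ for $X \subset V_q$ equals $6$ minus the dimension of the space of conics through the corresponding points of $\Proj^2$. One must therefore enumerate, as a function of $q$, the configurations of points in $\Proj^2(\Fq)$ according to the rank of the conics vanishing on them — in particular identifying the circuits (minimal dependent sets) and the nullity classes $N_i$. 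Since two conics meet in $4$ points (counted correctly), the small circuits come from points on lines (a line meets the Veronese image in a conic, giving dependencies) and from special point configurations; this is where the hypothesis $q \geqslant 4$ enters, ensuring enough points for the generic behaviour and excluding the sporadic coincidences that make $q=2,3$ different.

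Next, for each $\sigma \subset E$ I would compute $\beta_{n(\sigma),\sigma} = (-1)^{r(\sigma)-1}\chi(M_\sigma)$ via Theorem~\ref{th:Euler}, and using Proposition~\ref{use} (so that the inclusion-minimal members of $N_i^{(l)}$ coincide with those of $N_{i+l}$) transport this information to all the elongations. In practice one does not need every $\beta_{i,\sigma}$ from scratch: Theorem~\ref{thm:AAECC} pins down which $\beta_{i,j}$ are nonzero, the Herzog–K\"uhl equations (Theorem~\ref{th:HK}, in the Vandermonde form of Remark~\ref{vandermonde}) give $k$ linear relations among the $\phi_j(M^{(l)})$, and a modest amount of direct Euler-characteristic computation for the critical degrees $w$ supplies the remaining data needed to solve the system. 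The bookkeeping is organized degree by degree in $w$: only the degrees $w \in \{q^2-q,\ q^2,\ q^2+q,\ q^2-1,\ q^2+q-3,\ q^2+q-2,\ q^2+q-1,\ q^2+q+1\}$ actually produce nonzero Betti numbers, so the argument reduces to a finite (if lengthy) list of cases, each polynomial in $q$.

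The main obstacle is the combinatorial enumeration in the second step: classifying all point-configurations in $\Proj^2(\Fq)$ by the rank of their associated space of conics, and then counting — as explicit polynomials in $q$ — how many subsets of each cardinality fall in each nullity class $N_i$ and which of them are inclusion-minimal. This is delicate because it mixes projective incidence geometry over $\Fq$ (lines, conics, their intersections, degenerate conics splitting as pairs of lines) with matroid combinatorics, and it is precisely the place where the denominators dividing $24$ arise, from counting unordered configurations of up to $4$ or $5$ points. Once that enumeration is complete and tabulated, everything downstream — the $\chi(M_\sigma)$ computations, the Herzog–K\"uhl solve, the assembly of $P_w(Z)$, and the final inversion to the $A_w^{(r)}$ — is mechanical, and the resulting closed forms can be checked against Theorem~\ref{th:relspecpol} and against small cases $q=4,5$ by direct computation as a sanity test.
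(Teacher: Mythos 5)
Your proposal follows essentially the same route as the paper: identify the nullity classes via the dimension of the linear system of conics through the complementary point set (the paper's Lemma~\ref{lem:Huguesliker} and Theorem~\ref{th:minimalsubsets}), pin down the nonzero Betti degrees with Theorem~\ref{thm:AAECC}, solve the Herzog--K\"uhl systems after supplying one extra Euler-characteristic computation (the paper computes $\beta_{2,q^2-1}^{(0)}$ in Proposition~\ref{prop:beta2}, which is exactly the ``modest amount of direct computation'' you anticipate, since for $M_q$ itself there is one more unknown than equations), and then pass through Theorem~\ref{th:Jan} and invert Theorem~\ref{th:relspecpol}. The strategy and the list of relevant degrees $w$ match the paper's proof.
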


In order to prove this theorem, we will compute the Stanley-Reisner resolutions of the matroid $M_q$ and its elongations. We first will find which subsets of $\{1,\cdots,q^2+q+1\}$ that are minimal in the different $N_i$. In particular this will give us which Betti numbers $\beta^{(l)}_{1,j}$ are non-zero (Corollary \ref{cor:nonzerobetti}). When this is done, it  turns out that for every elongation $M_q^{(l)}$, for $l \ge 1$, the number of unknowns is equal to the number of Herzog-K\"uhl equations from Formula (\ref{HKBS}), and that all these equations are independent, For the matroid $M_q$ itself, however, there will be one unknown more than the number of equations. We will then, in Proposition \ref{result},  compute one of the missing Betti numbers $\beta^{(0)}_{2,q^2-1}$, After that we will be in a situation where  we can find all the Betti numbers with the Herzog-K\"uhl equations from Formula  (\ref{HKBS}). Thereafter we will compute the generalized weight polynomials $P(Z)$ using Theorem \ref{th:Jan}. Finally we will find the the higher weight spectra, using Theorem \ref{th:relspecpol} repeatedly.

\subsection{Stanley-Reisner resolutions}

We will use the following result by Hirschfeld~\cite{Hi} 
\begin{proposition}\label{pr:Hirschfeld}
In $\mathbb{P}^2_q$ the $\frac{q^6-1}{q-1}$ conics are as follows.  \begin{itemize} 
\item There are $q^2+q+1$ double lines,
\item There are $\frac{q(q+1)(q^2+q+1)}{2}$ pairs of two distinct lines
\item There are $q^5-q^2$ irreducible conics
\item There are $\frac{q(q-1)(q^2+q+1)}{2}$ conics that just possess a single $\Fq$-rational point each. 
\end{itemize}
\end{proposition}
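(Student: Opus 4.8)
The plan is to classify all conics in $\mathbb{P}^2_q$ by the geometry of their zero locus over $\overline{\Fq}$ and then to count each class separately. To begin with, a conic is the zero locus of a nonzero homogeneous quadratic polynomial in $x,y,z$ taken up to a nonzero scalar; such polynomials form a $6$-dimensional $\Fq$-vector space, so the conics are parametrised by $\mathbb{P}^5(\Fq)$, whence their number is $\tfrac{q^6-1}{q-1}$.

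The geometric trichotomy is the next step. Over $\overline{\Fq}$ a defining form $Q$ is either irreducible, or a product $\ell_1\ell_2$ of two distinct linear forms, or a square $\ell^2$; correspondingly $V(Q)$ is smooth, a pair of distinct lines, or a double line. (An irreducible plane conic over an algebraically closed field is automatically smooth: a singular point $P$ would force every line through $P$ to meet the conic only at $P$, so the conic would be reducible. This argument is characteristic-free, so it is safe to avoid the symmetric-matrix/``rank'' description, which breaks down in characteristic $2$.) Since irreducibility, the factorisation type and the field of definition of the factors are Galois-stable, a conic defined over $\Fq$ is of exactly one of the following kinds: a double line $\ell^2$ with $\ell$ an $\Fq$-line; an unordered pair $\{\ell_1,\ell_2\}$ of distinct $\Fq$-lines; an unordered pair $\{\ell,\sigma(\ell)\}$ of Frobenius-conjugate lines defined over $\mathbb{F}_{q^2}$ but not over $\Fq$; or a smooth conic over $\Fq$.

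It remains to count the four kinds. Double lines are in bijection with the lines of $\mathbb{P}^2_q$, so there are $q^2+q+1$ of them. Unordered pairs of distinct $\Fq$-lines number $\binom{q^2+q+1}{2}=\tfrac{q(q+1)(q^2+q+1)}{2}$. For the conjugate pairs, $\mathbb{P}^2_{q^2}$ has $q^4+q^2+1$ lines, of which $q^2+q+1$ are $\Fq$-rational; the remaining $q^4-q$ fall into $\tfrac{q^4-q}{2}=\tfrac{q(q-1)(q^2+q+1)}{2}$ Frobenius-orbits $\{\ell,\sigma(\ell)\}$, each giving a distinct conic over $\Fq$, and such a conic meets $\mathbb{P}^2(\Fq)$ in exactly the single point $\ell\cap\sigma(\ell)$: two distinct lines meet in one point, which is Frobenius-fixed hence rational, while any rational point of the conic lies on $\ell$ or on $\sigma(\ell)$ and, being Frobenius-fixed, then lies on both, i.e.\ equals $\ell\cap\sigma(\ell)$. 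Finally, every smooth conic over $\Fq$ has a rational point by Chevalley--Warning, hence is isomorphic to $\mathbb{P}^1$ (so has $q+1$ rational points), and all smooth conics over $\Fq$ are projectively equivalent; the stabiliser in $\mathrm{PGL}_3(\Fq)$ of a fixed one is the image of $\mathrm{PGL}_2(\Fq)$ acting through the $2$-uple embedding, of order $q^3-q$, so there are $|\mathrm{PGL}_3(\Fq)|/(q^3-q)=q^5-q^2$ smooth conics. Adding $q^2+q+1$, $\tfrac{q(q+1)(q^2+q+1)}{2}$, $q^5-q^2$ and $\tfrac{q(q-1)(q^2+q+1)}{2}$ returns $\tfrac{q^6-1}{q-1}$, which simultaneously confirms that the four kinds are exhaustive and finishes the count.

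The main obstacle is the characteristic-$2$ case: there the usual invariant ``rank of the symmetric $3\times 3$ matrix'' is useless (every $3\times 3$ alternating matrix is singular), so the three-way split of conics into smooth / line-pair / double line has to be obtained without it — most cleanly via the factorisation-and-singular-point argument sketched above, which is uniform in the characteristic. A lesser point that should be invoked rather than reproved is the transitivity of $\mathrm{PGL}_3(\Fq)$ on smooth conics together with the identification of the stabiliser with $\mathrm{PGL}_2(\Fq)$; anyone wishing to sidestep this can instead obtain $q^5-q^2$ by subtracting the first, second and fourth counts from $\tfrac{q^6-1}{q-1}$.
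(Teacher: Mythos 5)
Your proof is correct, but it is worth noting that the paper does not actually prove this proposition at all: it is quoted verbatim from Hirschfeld's book \cite{Hi}, so any self-contained argument is by definition a different route. Your route is the natural one and it holds up: parametrise conics by $\mathbb{P}^5(\Fq)$ to get the total $\tfrac{q^6-1}{q-1}$; classify by geometric factorisation type (irreducible, two distinct linear factors, a repeated linear factor), using the Bezout argument that a geometrically irreducible conic is smooth so as to stay characteristic-free; apply Galois descent to split the line-pair case into rational pairs and Frobenius-conjugate pairs; and count each class, the last one either by the orbit--stabiliser computation $|\mathrm{PGL}_3(\Fq)|/|\mathrm{PGL}_2(\Fq)|=q^5-q^2$ or by subtraction from the total. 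All four counts check out, the identification of the ``single rational point'' class with the conjugate line pairs is correct (that point being $\ell\cap\sigma(\ell)$, which is Frobenius-fixed), and you correctly flag the two points where care is needed: the failure of the symmetric-matrix rank classification in characteristic $2$, and the transitivity-plus-stabiliser fact for smooth conics, for which you sensibly offer the subtraction count as a fallback. The only cosmetic blemish is the phrasing of the smoothness argument: the precise statement is that the line joining a singular point $P$ to any second point of the conic meets it with total intersection multiplicity at least $3>2$, hence is a component; as written (``every line through $P$ meets the conic only at $P$'') the sentence slightly misstates the mechanism, though the conclusion and its use are correct. What your approach buys, compared with the paper's bare citation, is a verifiable and essentially elementary derivation; what the citation buys is brevity and a pointer to the standard reference where the same classification (and much more about conics over $\Fq$) is developed systematically.
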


There is a one-to-one correspondence between words of $C_q$ and affine equations for conics, and under this correspondence, the support of a codeword correspond to points of $\mathbb{P}_q^2$ that are not on the conic. Thus, the circuits of $M_q$ correspond to conics with maximal set of points (under inclusion). By Proposition~\ref{pr:Hirschfeld}, it is thus easy to see that we have two types of circuits, namely the one corresponding to pairs of lines, and the one corresponding to irreducible conics. This shows that \[\beta_{1,q^2+q+1-(2q+1)}^{(0)}= \frac{q(q+1)(q^2+q+1)}{2} \textrm{ and } \beta_{1,q^2+q+1-(q+1)}^{(0)} = q^5-q^2,\] the other $\beta_{1,j}^{(0)}$ being $0$. In order to compute the other Betti numbers of $M_q$, we will need the following lemma:

\begin{lemma}\label{lem:Huguesliker}
For any $X \subset E=\{1,\cdots,q^2+q+1\}$ the nullity $n(X)$ is equal to the dimension over $\Fq$ of the affine set of polynomial expressions that define conics that pass through all the points of $E\-X$.
\end{lemma}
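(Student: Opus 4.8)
The plan is to interpret the nullity $n(X)$ directly through the parity-check matroid $M_q$ and to identify the "affine set of polynomial expressions" with a linear-algebraic object attached to $E\setminus X$. Recall that $M_q = M_{C_q}$ is the matroid whose ground set $E=\{1,\dots,q^2+q+1\}$ indexes the points of $\mathbb{P}^2$, and whose independent sets are the subsets indexing columns of a parity check matrix $H_q$ of $C_q$ that are linearly independent over $\Fq$. Equivalently (and this is the point of view I would take), $M_q$ is the matroid of the columns of the generator matrix $G_q$: since $C_q$ has dimension $6$ and length $n=q^2+q+1$, the matroid of $G_q$ is the dual of the matroid of $H_q$, but it is cleaner here to work with $G_q$ whose columns are the Veronese coordinate $6$-tuples $\nu_q(P)$ of the points $P\in\mathbb{P}^2$. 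A codeword of $C_q$ is $\bm{a}G_q$ for $\bm a\in\Fq^6$, and if we write $\bm a=(a_{00},a_{01},a_{02},a_{11},a_{12},a_{22})$ and form the conic $F_{\bm a}(x,y,z)=a_{00}x^2+a_{01}xy+a_{02}xz+a_{11}y^2+a_{12}yz+a_{22}z^2$, then the $i$-th coordinate of $\bm aG_q$ is $F_{\bm a}(P_i)$. So the support of $\bm aG_q$ is exactly the set of points of $\mathbb{P}^2$ \emph{not} lying on the conic $F_{\bm a}$, as already noted in the paragraph preceding the lemma.

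The key step is the rank/nullity bookkeeping. Fix $X\subset E$ and let $Y=E\setminus X$, the set of points "outside" $X$. Let $W_Y\subset\Fq^6$ be the space of coefficient vectors $\bm a$ such that $F_{\bm a}$ vanishes at every point of $Y$; this is precisely the affine (indeed linear) set of polynomial expressions defining conics through all points of $E\setminus X$, so its dimension is the quantity on the right-hand side of the lemma. I would prove two things. First, $\dim_{\Fq} W_Y = 6 - r_G(Y)$, where $r_G(Y)$ is the rank in the column matroid of $G_q$ of the columns indexed by $Y$: indeed $W_Y$ is exactly the kernel of the evaluation map $\Fq^6\to\Fq^{|Y|}$, $\bm a\mapsto (F_{\bm a}(P))_{P\in Y}$, whose matrix is the $|Y|\times 6$ submatrix of $G_q^{\mathsf T}$ with rows indexed by $Y$, and whose rank is $r_G(Y)$. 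Second, I must translate $6-r_G(Y)$ into the nullity $n(X)$ of $M_q$ (the $H$-matroid). Here I use the standard relation between a matrix matroid and its dual: if $M$ is the column matroid of $G_q$ on $E$ (rank $6$) and $M^\perp = M_q$ is its dual (rank $n-6$), then for every $X\subset E$ the dual rank satisfies $r_{M_q}(X) = |X| - 6 + r_G(E\setminus X)$, hence the nullity of $X$ in $M_q$ is $n_{M_q}(X) = |X| - r_{M_q}(X) = 6 - r_G(E\setminus X) = 6 - r_G(Y) = \dim_{\Fq}W_Y$. Combining the two displayed equalities gives exactly $n(X) = \dim_{\Fq}\{\text{conics through all points of }E\setminus X\}$, as claimed. (One should also note $C_q$ is nondegenerate — the columns of $G_q$ span $\Fq^6$ since $V_q$ is nondegenerate in $\mathbb{P}^5$ — so $M_q$ genuinely has rank $n-6$ and no coordinate is a loop; this makes the duality identity apply without boundary issues.)

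The main obstacle, such as it is, is purely one of conventions: the matroid $M_q$ is defined in the excerpt via the \emph{parity check} matrix, whereas conics and their evaluation vectors live most naturally on the \emph{generator} side, so the heart of the argument is to verify carefully the dual-rank identity $r_{M^\perp}(X)=|X|-r(M)+r_M(E\setminus X)$ and apply it with $M = $ column matroid of $G_q$. Everything else — the identification of $W_Y$ with a kernel, and the rank of that kernel's defining matrix — is immediate linear algebra over $\Fq$. I would therefore structure the written proof as: (i) recall $C_q$ nondegenerate so $M_q=M^\perp$ for $M$ the column matroid of $G_q$; (ii) recall/state the dual-rank formula and deduce $n_{M_q}(X)=6-r_G(E\setminus X)$; (iii) identify $\{$polynomial expressions defining conics through $E\setminus X\}$ with $\ker(\Fq^6\to\Fq^{E\setminus X})$ and observe its dimension is $6-r_G(E\setminus X)$; (iv) conclude.
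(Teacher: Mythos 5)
Your proposal is correct and follows essentially the same route as the paper: both identify the conics through $E\setminus X$ with the kernel of the evaluation/projection map onto the coordinates indexed by $E\setminus X$ (dimension $6-r_G(E\setminus X)$), and both invoke the dual-rank identity $r_{M_q}(X)=|X|-r^*(E)+r^*(E\setminus X)$ to convert this into the nullity $n(X)$ in the parity-check matroid. Your added remark on nondegeneracy of $C_q$ is a harmless extra precaution; the argument is the same.
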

\begin{proof}The matroid derived from any generator matrix of $C_q$, is the dual matroid of $M_q$. Its rank function $r^*$ therefore satisfies \[r(X) = |X| + r^*(E\-X) - r^*(E)\] for $X \subset E$, and hence $n(X) = r^*(E) - r^*(E\-X).$
The last expression is equal to the dimension of the kernel of the projection map
when projecting all the code words, each of which corresponds to the affine equation of a conic, on to the subspace of $\Fq^n$ indexed by $E\-X$. This kernel is precisely the polynomials that define conics passing through the points of $E\-X$, or alternatively, the codewords, whose support lie inside $X$.
\end{proof}

We can therefore find when the Betti numbers of $M_q$ and its elongations are non-zero. This comes as a corollary of the following theorem:

\begin{theorem}\label{th:minimalsubsets}
We have the following. \begin{itemize}
\item The minimal elements of $N_1$ are the complements of the $\frac{q(q+1)(q^2+q+1)}{2}$ pairs of distinct lines and of the $q^5-q^2$ irreducible conics. 
\item The  minimal subsets of $N_2$ are the $q^2(q^2+q+1)$ complements of $q+1$ points on a line and a point outside of the line, and the $\frac{(q^2+q+1)q^2(q^2+q)(q-1)^2}{24}$ complements of quadrilateral configurations of $4$ points such that no $3$ points lie on a line.\\
\item The minimal elements of $N_3$ are the $q^2+q+1$ complements of $q+1$ points on a line, and the $\frac{(q^2+q+1)q^2(q^2+q)}{6}$ complements of triangle configurations of $3$ non-aligned points.\\
\item The minimal elements of $N_4$ are the $\frac{(q^2+q+1)(q^2+q)}{2}$ complements of pairs of points.\\
\item The minimal elements of $N_5$ are the $q^2+q+1$ complements of a single point.\\
\item The only element of $N_6$ is $E$.
\end{itemize}
\end{theorem}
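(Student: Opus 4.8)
The plan is to translate the combinatorial description of the minimal elements of each $N_i$ into a statement about conics through subsets of $\mathbb{P}^2_q$ via Lemma~\ref{lem:Huguesliker}, and then to use the geometry of conics classified in Proposition~\ref{pr:Hirschfeld}. The key observation is that, by Lemma~\ref{lem:Huguesliker}, $n(X)$ equals the dimension of the linear system of conics (inside the $6$-dimensional space of all conics) vanishing on the point set $Y=E\setminus X$; so $X\in N_i$ means this linear system has dimension exactly $i$, and $X$ is minimal in $N_i$ precisely when $Y$ is a maximal point set imposing exactly $6-i$ independent conditions on conics. Thus the whole theorem is equivalent to classifying, for each $i$ from $1$ to $5$, the inclusion-maximal point sets $Y\subset\mathbb{P}^2_q$ on which the space of conics has dimension $i$. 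I would organize the proof around that reformulation and treat the six cases in decreasing order of $|Y|$ (i.e. increasing $i$), since small-$i$ cases build intuition and the general imposed-conditions lemma does most of the work.

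First I would record the basic fact that a set of $t$ points imposes at most $\min(t,6)$ conditions on conics, with equality for $t\le 5$ in "general position", and that the dimension of conics through $Y$ drops by exactly one each time we add a point not already forced to lie on every conic of the current system. For $i=5$: conics through a single point form a hyperplane in the $\mathbb{P}^5$ of conics, so $\dim=5$, and any point set of size $\ge 2$ that is not "special" cuts this down; the maximal sets with $\dim=5$ are exactly single points (two distinct points already impose $2$ independent conditions), giving the $q^2+q+1$ complements. For $i=4$: two distinct points impose two independent conditions, so $\dim=4$; a third point fails to impose a new condition only if it lies on every conic through the first two — impossible for three points — hence the maximal sets are pairs of points, giving $\binom{q^2+q+1}{2}$ of them. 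For $i=3$ and $i=2$ the dichotomy "collinear vs. not" appears: three collinear points lie on a pencil of conics of dimension $3$ (all conics containing that line, i.e. the line times a variable line), and this cannot be enlarged while keeping $\dim=3$ except by adding more points of the same line; three non-collinear points impose three independent conditions, $\dim=3$, and a fourth point in general position drops it to $2$ — so the maximal $\dim=3$ sets are full lines ($q+1$ points) and triangles. Counting lines gives $q^2+q+1$, and counting ordered-then-unordered triples of non-collinear points gives $\frac{(q^2+q+1)q^2(q^2+q)}{6}$. For $i=2$ one continues one step: four points no three collinear impose four independent conditions ($\dim=2$), and such a configuration is maximal; alternatively $q+1$ collinear points plus one extra point off the line give the reducible pencil of conics "that fixed line times an arbitrary line through the extra point" of dimension $2$, maximal since adding any further point forces a condition — the counts $\frac{(q^2+q+1)q^2(q^2+q)(q-1)^2}{24}$ (choose four points no three collinear) and $q^2(q^2+q+1)$ (choose a line, then a point off it) then follow by routine enumeration. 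Finally $i=1$ is exactly the circuit case already settled: maximal point sets with a one-dimensional system of conics are the complements of the supports of the reducible (pair-of-lines) and irreducible conics in Proposition~\ref{pr:Hirschfeld}, with $\frac{q(q+1)(q^2+q+1)}{2}$ and $q^5-q^2$ of them; and $i=6$ forces $Y=\emptyset$, i.e. $X=E$.

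The main obstacle I anticipate is the $i=2$ and $i=3$ cases: one must prove not only that the listed configurations have the claimed nullity, but that they are \emph{inclusion-maximal} and that there are \emph{no other} maximal configurations. Concretely, one has to show every point set imposing exactly $4$ (resp. $3$) independent conditions on conics either sits inside one of the two listed families or can be enlarged, which amounts to the classical fact that the only way a set of $\ge 4$ points fails to impose $4$ independent conditions on conics is to contain $3$ collinear points — this is where the degree-$2$ Bézout-type argument (a conic meeting a line in $\ge 3$ points contains the line) is used, and where one must be careful that $q\ge 4$ guarantees enough points on each line and off each line for the configurations to exist and for the "enlarge it" step to go through. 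The counting formulas themselves are then straightforward: divide the relevant falling factorials of $q^2+q+1$ by the automorphism counts $|S_3|=6$, $|S_4|=24$, $|S_2|=2$, after subtracting collinear degeneracies where needed, and match against the stated numerators.
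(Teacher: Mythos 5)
Your proposal is correct and follows essentially the same route as the paper: both reduce the problem via Lemma~\ref{lem:Huguesliker} to classifying inclusion-maximal point sets $Y=E\setminus X$ through which the linear system of conics has a prescribed dimension, both handle the cases by a collinear-versus-general-position dichotomy resting on the fact that a conic meeting a line in more than two points contains it, and both obtain the counts by the same orbit/automorphism enumeration. The only differences are organizational (you order the cases by $|Y|$ and isolate the ``points imposing independent conditions'' lemma explicitly, where the paper argues each cardinality range directly), so there is nothing substantive to add.
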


\begin{proof}
In the text following Proposition \ref{pr:Hirschfeld}, we have already treated the case with determining minimal elements of $N_1$. The complement of any set of points, such that no conic contains all of them, has nullity $0$ and is not considered here.\\
We will now determine the minimal elements of $N_2$. A subset of cardinality at least $q+3$ lying on a conic necessarily lies on a pair of lines, and defines  these two lines uniquely. Therefore, its complement has nullity $1$, and does not need to be considered here. Any subset of cardinality $q+2$ lying on a conic necessarily lies on a pair of distinct lines. If not $q+1$ of the points lie on the same line, then both lines are uniquely defined, and the nullity of the complement is $1$ again. If $q+1$ points lie on the same line, then there is an (exactly)  $2$-dimensional affine family of quadric polynomials which define conics going through these points (a fixed line and a variable line), and the nullity of the complement is $2$ by Lemma~\ref{lem:Huguesliker}. Obviously, the complement of these configurations are minimal in $N_2$. Moreover there are exactly $q^2(q^2+q+1)$ such configurations. Consider now $X \subset E$ with $5 \leqslant |X| \leqslant q+1$ that lie on a conic. If the points of $X$ lie on the same line, then $n(E\-X) = 3$ and it doesn't have to be considered here. If they lie on a pair of lines (but not a single line), then either $n(E\-X)=1$ if the two lines are uniquely defined, or $n(E\-X)=2$, but $E\-X$ is not minimal in $N_2$ (we could complete $X$ with the remaining points on the line that is uniquely defined). If they lie on an irreducible conic, then $n(E\-X)=1$ since an irreducible conic is uniquely defined by $5$ of its points. Consider now $X \subset E$ with $|X|=4$ and (then) lying on a conic. If $3$ of them are aligned, then we can argue in the same way as before for lines and pair of lines (so $E\-X$ is not minimal in any $N_i$). If no $3$ of them are aligned, then  there is a $2$-(and not $3$-)dimensional affine family of quadric polynomials defining  conics passing through $X$, and therefore $n(E\-X)=2$. Obviously, these configurations are minimal in $N_2$, since adding a point reduces the nullity (either being on a unique irreducible conic, or uniquely determined pairs of lines). There are exactly $\frac{(q^2+q+1)q^2(q^2+q)(q-1)^2}{24}$ such configurations. Finally, since the rank of the code is $6$, all subsets of cardinality at most $3$ have nullity at least $3$, and this completes the analysis of the minimal sets of $N_2.$ .\\

The other cases are done in a similar way.
Let us determine the minimal elements of $N_3$: The nullity of the complement of any subset of cardinality at least $q+2$ is at most $2$, as we have seen. The complement of $q+1$ points on a line, on the other hand,  are then minimal in $N_3$, and there are exactly $q^2+q+1$ lines in $\mathbb{P}_q^2$. The complements of any subset of cardinality between $q$ and $4$ has either nullity different from $3$ or are not minimal in $N_3$. Three non-aligned points give a $3$-dimensional affine family of quadric polynomials defining  conics passing through $X$, and the complement of the set of these points are minimal in $N_3$. There are $\frac{(q^2+q+1)q^2(q^2+q)}{6}$ such configurations. Finally, the complements of $2$ or less points have nullity at least $4$ since the rank of the code is $6$.\\
For nullity $4,5,6$, then we can see that $3$ points or more have complements with nullity at most $3$. And $i$ points give a $(6-i)$-dimensional affine family of quadric polynomials defining  conics passing through the $i$ points, for $i=2,1,0$ ,  Moreover there are  $\frac{(q^2+q+1)(q^2+q)}{2}$ pairs of points, $q^2+q+1$ single points and $1$ empty set in $\mathbb{P}_q^2,$
corresponding to $i=2,1,0,$ respectively. These observations settles the cases of finding the minimal elements of $N_4,N_5,N_6$.
\end{proof}
We recall that the length of the resolution of $R_{M_q}$ is $\dim C_q=6,$ and the lengths of the resolutions of $R_{M^{(i)}_q}$ then are $6-i$, for $i=1,\cdots,5.$
\begin{corollary}\label{cor:nonzerobetti}The only non-zero Betti numbers of $M_q^{(i)}$ for $0 \leqslant i \leqslant 5$ are $\beta_{0,0}^{(i)}=1$ and \[\beta_{1-i,q^2-q}^{(i)}, \beta_{1-i,q^2}^{(i)}, \beta_{2-i,q^2-1}^{(i)}, \beta_{2-i,q^2+q-3}^{(i)}, \beta_{3-i,q^2}^{(i)}, \beta_{3-i,q^2+q-2}^{(i)},  \beta_{4-i,q^2+q-1}^{(i)},  \beta_{5-i,q^2+q}^{(i)}\]
  $\beta_{6-i,q^2+q+1}^{(i)}$ when these quantities make sense. Moreover, we have \[\begin{array}{ll}
\beta_{1,q^2-q}^{(0)}= \frac{q(q+1)(q^2+q+1)}{2} & \beta_{1,q^2}^{(0)} = q^5-q^2\\
\beta_{1,q^2-1}^{(1)}= q^2(q^2+q+1) & \beta_{1,q^2+q-3}^{(1)} = \frac{(q^2+q+1)q^2(q^2+q)(q-1)^2}{24}\\
\beta_{1,q^2}^{(2)}=  q^2+q+1 & \beta_{1,q^2+q-2}^{(2)} = \frac{(q^2+q+1)q^2(q^2+q)}{6}\\
\beta_{1,q^2+q-1}^{(3)} = \frac{(q^2+q+1)(q^2+q)}{2} & \beta_{1,q^2+q}^{(4)} = q^2+q+1 \\ 
\beta_{1,q^2+q+1}^{(5)} = 1\end{array}\]
\end{corollary}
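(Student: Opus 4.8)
The plan is to derive Corollary~\ref{cor:nonzerobetti} almost entirely from Theorem~\ref{th:minimalsubsets}, Theorem~\ref{thm:AAECC}, and Proposition~\ref{use}, so the proof is essentially bookkeeping rather than new geometry. First I would recall from Theorem~\ref{thm:AAECC} that for the matroid $M_q$ one has $\beta_{i,j}^{(0)} \neq 0$ if and only if $N_i$ contains an inclusion-minimal set of cardinality $j$; Theorem~\ref{th:minimalsubsets} lists, for each $i$ from $1$ to $6$, exactly which cardinalities $j$ occur among the minimal elements of $N_i$ (and, reading off the complements, these cardinalities are $n-(2q+1)=q^2-q$ and $n-(q+1)=q^2$ for $i=1$; $q^2-1$ and $q^2+q-3$ for $i=2$; $q^2$ and $q^2+q-2$ for $i=3$; $q^2+q-1$ for $i=4$; $q^2+q$ for $i=5$; and $q^2+q+1=n$ for $i=6$). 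Together with $\beta_{0,0}^{(0)}=1$ this gives the nonzero Betti numbers of $M_q$ itself, i.e.\ the case $i=0$ of the claimed list.

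Next I would handle the elongations. By Proposition~\ref{use}, the inclusion-minimal elements of $N^{(l)}_{i}$ coincide with those of $N_{i+l}$ for $0 \leqslant i \leqslant 6-l$, and Theorem~\ref{thm:AAECC} applies verbatim to the matroid $M_q^{(l)}$ (whose rank is $6+l$ up to the cap at $n$, so its resolution has length $6-l$). Hence $\beta_{i,j}^{(l)} \neq 0$ with $i\geqslant 1$ precisely when $N_{i+l}$ has an inclusion-minimal set of cardinality $j$. Re-indexing the list above by substituting $i \mapsto i+l$ immediately produces the stated pattern: the nonzero higher Betti numbers of $M_q^{(l)}$ are $\beta_{1-l,q^2-q}^{(l)}$, $\beta_{1-l,q^2}^{(l)}$, $\beta_{2-l,q^2-1}^{(l)}$, and so on, each surviving only when its homological index $\geqslant 1$, which is exactly the phrase ``when these quantities make sense.'' Adding $\beta_{0,0}^{(l)}=1$ completes the list of nonzero Betti numbers for every $0 \leqslant l \leqslant 5$.

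It then remains to pin down the explicit values of the first (linear-strand) Betti numbers $\beta_{1,j}^{(l)}$. For this I would invoke the last assertion of Theorem~\ref{th:Euler}: for any circuit $\sigma$ one has $\beta_{1,\sigma}=1$. The inclusion-minimal elements of $N^{(l)}_1$ are, by Proposition~\ref{use}, the inclusion-minimal elements of $N_{1+l}$, and these are precisely the circuits of $M_q^{(l)}$; each contributes $1$ to $\beta_{1,j}^{(l)}$ where $j$ is its cardinality, so $\beta_{1,j}^{(l)}$ simply counts the minimal elements of $N_{1+l}$ of cardinality $j$. Those counts are exactly the combinatorial quantities enumerated in Theorem~\ref{th:minimalsubsets}: for $l=0$ the $\tfrac{q(q+1)(q^2+q+1)}{2}$ pairs of lines and $q^5-q^2$ irreducible conics give $\beta_{1,q^2-q}^{(0)}$ and $\beta_{1,q^2}^{(0)}$; for $l=1$ the $q^2(q^2+q+1)$ ``line-plus-point'' configurations and the $\tfrac{(q^2+q+1)q^2(q^2+q)(q-1)^2}{24}$ quadrilaterals give $\beta_{1,q^2-1}^{(1)}$ and $\beta_{1,q^2+q-3}^{(1)}$; and so on down through $l=5$, where the unique set $E$ gives $\beta_{1,q^2+q+1}^{(5)}=1$. (One should note that $\beta_{1,q^2}^{(0)}$ already appeared as $\beta_{1,q^2+q+1-(q+1)}^{(0)}$ in the text after Proposition~\ref{pr:Hirschfeld}, so this is merely a restatement there.)

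I do not foresee a genuine obstacle here: the content is front-loaded into Theorem~\ref{th:minimalsubsets}, and the corollary is a transcription exercise. The one point requiring a little care is the indexing of the elongations --- making sure the shift $N^{(l)}_i = N_{i+l}$ is applied consistently, that the homological length $6-l$ is respected so we do not list Betti numbers with nonpositive index, and that the ``when these quantities make sense'' caveat is understood to mean exactly ``when the homological index is at least $1$ and at most $6-l$.'' The only other subtlety is to double-check, in each of the five elongations, that Theorem~\ref{th:Euler}'s identification of minimal $N_{1}^{(l)}$-elements with circuits of $M_q^{(l)}$ is legitimate --- which it is, since an inclusion-minimal set of nullity $1$ in any matroid is by definition a circuit.
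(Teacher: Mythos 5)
Your proof is correct and follows the paper's own route exactly: the paper derives this corollary as an immediate consequence of Theorems \ref{th:Euler}, \ref{thm:AAECC}, \ref{th:minimalsubsets} and Proposition \ref{use}, which are precisely the ingredients you assemble (minimal sets of $N_{i+l}$ give the nonzero positions, and counting circuits via $\beta_{1,\sigma}=1$ gives the explicit first-strand values). The only slip is your parenthetical claim that $M_q^{(l)}$ has rank $6+l$ --- its rank is $r(M_q)+l=q^2+q-5+l$, hence corank $6-l$ --- but your conclusion that the resolution has length $6-l$ is correct and nothing in the argument depends on the misstated rank.
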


\begin{proof} This is an immediate consequence of Theorems~\ref{th:Euler},~\ref{thm:AAECC} and~\ref{th:minimalsubsets} and Proposition \ref{use}.
\end{proof}

As a corollary, we can find the generalized Hamming weights of the Veronese codes, already given in~\cite{Z}:

\begin{corollary} The generalized Hamming weights of the code $C_q$ are \[\begin{array}{cccccc} d_1= q^2-q, & d_2= q^2-1, & d_3=q^2, & d_4=q^2+q-1, & d_5=q^2+q, & d_6=q^2+q+1\end{array}.\]
\end{corollary}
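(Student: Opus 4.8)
The plan is to read off the generalized Hamming weights directly from Corollary~\ref{cor:nonzerobetti} via Theorem~\ref{thm:AAECC}. Recall that Theorem~\ref{thm:AAECC} says $d_i = \min\{j : \beta_{i,j}^{(0)} \neq 0\}$, so all I need is to locate, for each $i$ from $1$ to $6$, the smallest degree $j$ in which a nonzero $\N$-graded Betti number $\beta_{i,j}$ of the matroid $M_q$ occurs.

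First I would specialize the list in Corollary~\ref{cor:nonzerobetti} to the case $i=0$ (i.e.\ the matroid $M_q$ itself, not a proper elongation). The nonzero Betti numbers $\beta_{p,j}^{(0)}$ then occur exactly at the homological positions $p = 1,2,3,4,5,6$, and for each such $p$ the corollary pins down which degree $j$ is the relevant one: $\beta_{1,q^2-q}^{(0)}$ and $\beta_{1,q^2}^{(0)}$ both sit in homological degree $1$; $\beta_{2,q^2-1}^{(0)}$ and $\beta_{2,q^2+q-3}^{(0)}$ in homological degree $2$; $\beta_{3,q^2}^{(0)}$ and $\beta_{3,q^2+q-2}^{(0)}$ in homological degree $3$; $\beta_{4,q^2+q-1}^{(0)}$ in homological degree $4$; $\beta_{5,q^2+q}^{(0)}$ in homological degree $5$; and $\beta_{6,q^2+q+1}^{(0)}$ in homological degree $6$. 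For $p=1$ the two explicit values $\frac{q(q+1)(q^2+q+1)}{2}$ and $q^5-q^2$ are manifestly nonzero for $q\geqslant 4$, and similarly all the displayed values in the corollary are visibly nonzero polynomials in $q$ for $q\geqslant 4$; I would note this once so that "the smallest $j$ with $\beta_{i,j}^{(0)}\neq 0$" really is attained at one of the listed degrees.

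Then I would simply take the minimum of the degrees appearing in each homological slot: for $i=1$ the candidate degrees are $q^2-q$ and $q^2$, so $d_1=q^2-q$; for $i=2$ the candidates are $q^2-1$ and $q^2+q-3$, and since $q\geqslant 4$ gives $q^2-1 < q^2+q-3$, we get $d_2 = q^2-1$; for $i=3$ the candidates are $q^2$ and $q^2+q-2$, so $d_3=q^2$; for $i=4,5,6$ there is a single candidate degree in each slot, giving $d_4=q^2+q-1$, $d_5=q^2+q$, $d_6=q^2+q+1$. This matches the asserted table. The only comparisons that require a word of justification are the two strict inequalities $q^2-1 < q^2+q-3$ and $q^2 < q^2+q-2$, both of which hold because $q\geqslant 4$ (indeed already for $q\geqslant 2$); everything else is an equality of the unique listed degree with $d_i$.

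I do not anticipate any real obstacle here: the entire content has already been done in Theorem~\ref{th:minimalsubsets} and Corollary~\ref{cor:nonzerobetti}, and this corollary is a one-line bookkeeping consequence via Theorem~\ref{thm:AAECC}. If anything deserves care it is merely confirming that for each $i$ at least one Betti number in the corresponding homological degree is genuinely nonzero for $q\geqslant 4$ — i.e.\ that none of the polynomial expressions in the corollary vanishes for such $q$ — and making the two degree comparisons above explicit; both are immediate.
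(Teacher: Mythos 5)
Your proposal is correct and follows essentially the same route as the paper, whose entire proof is ``a direct consequence of Theorem~\ref{thm:AAECC}'' applied to the list of nonvanishing Betti numbers in Corollary~\ref{cor:nonzerobetti}; you merely spell out the minimization over degrees in each homological slot, which is exactly the intended bookkeeping. (One immaterial nitpick: your parenthetical claim that the strict inequalities hold ``already for $q\geqslant 2$'' fails at $q=2$, where $q^2-1=q^2+q-3$ and $q^2=q^2+q-2$, but the minima, and hence the conclusion, are unaffected.)
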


\begin{proof}This is a direct consequence of Theorem~\ref{thm:AAECC}.
\end{proof}

After using Corollary \ref{cor:nonzerobetti} we have $7$ unknown remaining Betti number in the  $6$ (Herzog-K\"uhl) equations  described in Formula (\ref{HKBS}) for the matroid $M_q$, We have $5$ equations for $M_q^{(1)}$, with $5$ unknown Betti numbers, and for $2 \leqslant l \leqslant 5$, we have $6-l$ equations for $M_q^{(l)}$ for $5-l$ unknown Betti numbers. We will now find $\beta_{2,q^2-1}^{(0)}$, and thus reduce the number of unknown Betti numbers $\beta^{(0)}_{i,j}$ from $7$ to $6$. Thereafter, it turns out that all the Herzog-K\"uhl equation sets from Formula (\ref{HKBS}) will be independent, and we will find all the remaining unknown $\beta^{(l)}_{i,j}$, for $l=0,\cdots,5$ .

\begin{proposition}\label{prop:beta2}
Let $X \subset E$ be a set of $q+1$ points on a line together with a point outside of this line. Then \[\beta_{2,E\-X}^{(0)} = q.\]
\end{proposition}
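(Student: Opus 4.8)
The plan is to compute $\beta_{2,E\-X}^{(0)}$ via Theorem~\ref{th:Euler}, which gives $\beta_{n(\sigma),\sigma} = (-1)^{r(\sigma)-1}\chi(M_\sigma)$ for $\sigma = E\-X$. By Lemma~\ref{lem:Huguesliker} and the analysis in Theorem~\ref{th:minimalsubsets}, $X$ has $n(X)=2$, so with $\sigma=X$ we have $n(\sigma)=2$ and $r(\sigma)=|\sigma|-2 = q+1$. Thus the target number is $(-1)^{q}\chi(M_X)$, and the work is to evaluate the Euler characteristic of the restricted matroid $M_X$, i.e.\ to count subsets $\tau\subset X$ that are independent in $M_q$ (equivalently, whose complement $E\-\tau$ supports no conic beyond the expected dimension), weighted by $(-1)^{|\tau|}$.

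First I would set up the combinatorics of $X$ concretely: write $X = L \cup \{P\}$ where $L$ is the set of $q+1$ points of a line $\ell$ and $P\notin\ell$. A subset $\tau\subset X$ is dependent in $M_q$ precisely when $n(\tau) \geq 1$, i.e.\ when the conics through $E\-\tau$ form a family of positive dimension; by Lemma~\ref{lem:Huguesliker} this is the condition that the points of $E\-\tau$ fail to impose independent conditions on quadrics, equivalently $\dim\{\text{quadrics vanishing on } E\-\tau\} \geq 1$. Since $|E\-\tau| = q^2+q+1-|\tau|$ and the space of quadrics is $6$-dimensional, for $|\tau|\leq 4$ one automatically has nullity $\geq 2$ (the rank of the code is $6$), so all such $\tau$ are dependent; the interesting range is $5 \leq |\tau| \leq q+2$. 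For $\tau$ contained in $L$ (so $|\tau|\leq q+1$), any quadric vanishing at $\geq 3$ collinear points of $\ell$ must contain $\ell$ as a component, hence vanishes on all of $E\-\tau$ is forced only modulo the choice of the residual line — this is exactly the $2$-dimensional phenomenon from the proof of Theorem~\ref{th:minimalsubsets}, so $n(E\-\tau)\geq 2$ whenever $\tau\subset L$ has $|\tau|\geq 5$... wait, I need to be careful about whether it is $n(E\setminus\tau)$ or $n(\tau)$ — here $\tau$ ranges over subsets of $X$ and independence of $\tau$ in $M_q$ is what matters, so $\tau$ is dependent iff the columns of $H$ indexed by $\tau$ are dependent, iff $n(\tau)\geq 1$. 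Translating: $\tau$ dependent $\iff$ there is a conic passing through all of $E\-\tau$ with the conics through $E\setminus\tau$ forming a positive-dimensional family. I would carefully redo this translation and then enumerate: the dependent $\tau\subset X$ of each cardinality, splitting into cases $\tau\subset L$, and $P\in\tau$ with $\tau\cap L$ of size $j$, using that $q+1$ points on a line plus one point off it already carries a $2$-dimensional quadric family while removing points drops it.

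Then I would assemble $\chi(M_X) = \sum_i (-1)^i |\{\tau\subset X: |\tau|=i,\ \tau\in\I\}|$, using $|X| = q+2$ and the fact that a subset $\tau$ is independent iff $|\tau|\leq 6$ and $\tau$ imposes independent conditions on conics; the nontrivial constraints only appear once $|\tau\cap L|\geq 3$ (three collinear points forcing $\ell$) or once the whole configuration $L\cup\{P\}$ appears. Carrying out the alternating sum should collapse — via binomial identities like $\sum_i (-1)^i\binom{n}{i}=0$ applied to the "free" part of the complex — to leave the stated value, whose sign $(-1)^{r(\sigma)-1}=(-1)^{q}$ must combine with the sign of $\chi$ to give the positive answer $q$. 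The main obstacle I anticipate is the bookkeeping in the middle range $|\tau|$ near $q+2$: one must correctly identify, for each $\tau$ with $P\in\tau$ and $|\tau\cap L| = q+1$ (i.e.\ $\tau = X$) versus $|\tau\cap L| = q$, etc., exactly when the residual quadric family is $0$-, $1$-, or $2$-dimensional, and the answer depends delicately on collinearity. I would double-check the final count against small $q$ (e.g.\ $q=4$) by direct computation to make sure the alternating sum truly telescopes to $q$.
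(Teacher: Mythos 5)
Your overall strategy---invoke Theorem~\ref{th:Euler} and evaluate the Euler characteristic of a restricted matroid---is exactly the paper's strategy, but you have applied it to the wrong set, and this breaks the computation. The Betti number to be found is $\beta_{2,E\setminus X}^{(0)}$, whose multidegree is $E\setminus X$, a set of $q^2-1$ points; Theorem~\ref{th:Euler} must therefore be used with $\sigma=E\setminus X$. What Theorem~\ref{th:minimalsubsets} actually asserts is that the \emph{complements} of these $(q+2)$-point configurations are minimal in $N_2$, i.e.\ $n(E\setminus X)=2$, and the matroid whose Euler characteristic is needed is $M_{E\setminus X}$, the restriction to the $q^2-1$ points \emph{not} in $X$. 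You instead set $\sigma=X$ and propose to compute $\chi(M_X)$ on the $q+2$ points of $X$. Your claim that $n(X)=2$ is false: by Lemma~\ref{lem:Huguesliker}, $n(X)$ is the dimension of the family of conics passing through the $q^2-1$ points of $E\setminus X$, and for $q\geqslant 4$ no conic contains that many points, so $n(X)=0$ and $X$ is independent in $M_q$. Hence every subset of $X$ is independent, $M_X$ is a full simplex, and $\chi(M_X)=\sum_{\tau\subset X}(-1)^{|\tau|}=0$: carried out faithfully, your plan returns $0$, not $q$. The remark that all $\tau$ with $|\tau|\leqslant 4$ ``automatically have nullity $\geqslant 2$'' is a symptom of the same complement confusion---in the parity-check matroid small subsets are independent, and it is the complements of small subsets that have large nullity. (Your sign $(-1)^{q}$ happens to coincide with the correct $(-1)^{r(E\setminus X)-1}=(-1)^{q^2-4}$ only because $q^2\equiv q \pmod 2$.)

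The repair is to work in $M_{E\setminus X}$. A subset $Y\subset E\setminus X$ is dependent in $M_q$ if and only if some nonzero codeword has support inside $Y$, i.e.\ some conic contains all of $E\setminus Y\supset X=D\cup\{P_0\}$; such a conic must contain the line $D$ and the point $P_0$, hence is one of the $q+1$ pairs of lines $D\cup\ell$ with $P_0\in\ell$, parametrized by $\ell\cap D$. Counting, for each cardinality, the sets $E\setminus Y$ lying in one of these $q+1$ conics and containing $X$, one gets $(q+1)\binom{q-1}{q^2-1-z}$ dependent subsets of size $z$ for $q^2-q\leqslant z\leqslant q^2-2$, while $Y=E\setminus X$ itself is counted once rather than $q+1$ times; the alternating sum then collapses, via the vanishing of alternating sums of binomial coefficients, to $\chi(M_{E\setminus X})=(-1)^{q^2}q$, and Theorem~\ref{th:Euler} gives $\beta_{2,E\setminus X}^{(0)}=q$. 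That surplus of $q$ coming from the single over-counted set is the entire content of the answer, and it is invisible from inside $X$.
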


\begin{proof} Write $X= D \cup \{P_0\}$ where $D$ is the line and $P_0$ the point outside. For ease of notation we denote $M_q$ by $M$.   We consider the restricted matroid $M_{E\-X}$ and will compute its Euler characteristic, and conclude by Theorem~\ref{thm:AAECC}. We will denote, for $0 \leqslant z \leqslant q^2-1$, \[D_z= \left|\left\{Y \subset E\-X:\ |\sigma|=z \textrm{ and } \sigma \not \in \I\right\}\right|.\] For $X \subset Z$ then $E\-Z \not \in \I$ if and only if $Z$ is contained in a conic, and necessarily this conic has to be a pair of lines containing $D$ and $P_0$. Thus, if $0 \leqslant z <q^2-q$, then $D_z = 0$. Also, $D_{q-1}=1$. Now, consider $q^2-q \leqslant z \leqslant q^2-2$. The pair of lines containing $X$ are parametrized by the points of $D$. And if $Z$ is a subset of such a parametrized conic of cardinality $t$, then we have $\binom{q-1}{t-(q+2)}$ choices for $Z$. Thus we find that \[D_z = (q+1)\binom{q-1}{q^2-1-z}.\] Using the fact that the alternate sums of binomial coefficients is $0$, we get that \[\chi(M_{E\-X}) = \sum_{z=0}^{q^2-1}(-1)^zD_z = (-1)^{q^2}q.\]
\end{proof}

\begin{corollary} \label{result}
We have \[\beta_{2,q^2-1}^{(0)} = q^3(q^2+q+1).\]
\end{corollary}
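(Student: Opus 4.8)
The target equation $\beta_{2,q^2-1}^{(0)} = q^3(q^2+q+1)$ is a statement about a single $\mathbb{N}$-graded Betti number of $M_q$ in homological degree $2$ and internal degree $j=q^2-1$. The plan is to obtain it by summing the $\mathbb{N}^n$-graded Betti numbers $\beta_{2,\sigma}^{(0)}$ over all $\sigma\subset E$ with $|\sigma|=q^2-1$, using the identity $\beta_{i,j}=\sum_{\mathrm{wt}(\alpha)=j}\beta_{i,\alpha}$ recorded after the definition of Betti numbers. By Theorem~\ref{thm:AAECC} together with Theorem~\ref{th:minimalsubsets}, the only sets $\sigma$ of cardinality $q^2-1$ that can contribute are the complements of the minimal elements of $N_2$ that have cardinality $q^2-1$; from the list in Theorem~\ref{th:minimalsubsets}, the minimal elements of $N_2$ of the right size are exactly the $q^2(q^2+q+1)$ configurations consisting of $q+1$ points on a line plus one point off that line (their complements have cardinality $q^2+q+1-(q+2)=q^2-1$), while the quadrilateral configurations have complements of cardinality $q^2+q-3\neq q^2-1$. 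Hence only sets $X$ of the shape in Proposition~\ref{prop:beta2} contribute.

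First I would count these configurations: a line in $\mathbb{P}^2_q$ can be chosen in $q^2+q+1$ ways, it carries exactly $q+1$ points which must all be taken, and the extra point outside the line can be chosen in $(q^2+q+1)-(q+1)=q^2$ ways, giving $q^2(q^2+q+1)$ sets $X$, hence the same number of complements $\sigma=E\setminus X$ of cardinality $q^2-1$. Next, Proposition~\ref{prop:beta2} tells us that for each such $\sigma$ the corresponding $\mathbb{N}^n$-graded Betti number is $\beta_{2,\sigma}^{(0)}=q$: indeed the restricted matroid $M_{E\setminus X}$ has rank $q^2-1$ (equivalently nullity $n(X)=2$, so $n(\sigma)=|\sigma|-r(\sigma)$... more precisely the homological index $n(X)=2$ is where this Betti number lives), and by Theorem~\ref{th:Euler} we have $\beta_{n(\sigma'),\sigma'}=(-1)^{r(\sigma')-1}\chi(M_{\sigma'})$; the computation in Proposition~\ref{prop:beta2} gives $\chi(M_{E\setminus X})=(-1)^{q^2}q$ and $r(E\setminus X)=q^2-1$, and one checks the sign $(-1)^{(q^2-1)-1}=(-1)^{q^2}$ works out so that $\beta_{2,E\setminus X}^{(0)}=q$. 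One must also note that no set $\sigma$ of cardinality $q^2-1$ which is \emph{not} of this form contributes: any such $\sigma$ either is independent in $M_q$ or has a strictly smaller minimal non-independent subset, so by Theorem~\ref{thm:AAECC} (and the characterization of which $\beta_{i,\sigma}$ are nonzero via inclusion-minimal elements of $N_i$) the Betti number $\beta_{2,\sigma}^{(0)}$ vanishes.

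Putting this together,
\[
\beta_{2,q^2-1}^{(0)}=\sum_{|\sigma|=q^2-1}\beta_{2,\sigma}^{(0)}=q^2(q^2+q+1)\cdot q=q^3(q^2+q+1),
\]
which is the claim. The only genuinely substantive input is Proposition~\ref{prop:beta2}, already proved; everything else here is the bookkeeping of translating the $\mathbb{N}^n$-graded data into the $\mathbb{N}$-graded number. The step I expect to require the most care is verifying that the sets of cardinality $q^2-1$ arising in Theorem~\ref{th:minimalsubsets} are \emph{only} the "line plus a point" complements — in particular checking that the quadrilateral configurations (complements of size $q^2+q-3$) and the various non-minimal sets genuinely do not land in internal degree $q^2-1$ with a nonzero $\beta_2$ — and confirming the sign in the passage from $\chi(M_{E\setminus X})$ to $\beta_{2,E\setminus X}^{(0)}$ via Theorem~\ref{th:Euler}. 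Since $q\ge 4$ these cardinalities are all distinct, so the degree argument is clean.
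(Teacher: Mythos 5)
Your argument is correct and follows essentially the same route as the paper: the Betti number is obtained as the product of the count $q^2(q^2+q+1)$ of minimal elements of $N_2$ of cardinality $q^2-1$ (Theorem~\ref{th:minimalsubsets}) with the local contribution $q$ from Proposition~\ref{prop:beta2}. (One small slip: the rank of $\sigma=E\setminus X$ is $q^2-3$, not $q^2-1$, since $n(\sigma)=2$; this does not change the parity of $(-1)^{r(\sigma)-1}$, so your sign check still goes through.)
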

\begin{proof} This is a direct consequence of Theorem~\ref{th:minimalsubsets}: $\beta_{2,q^2-1}^{(0)}$ is the product of the number $q^2(q^2+q+1)$ of minimal elements of $N_2$ of degree $q^2-1$,  and the "local" contribution $\beta_{2,E\-X}=|\chi(M_{E\-X})| =|(-1)^{q^2}q|=q$ which we calculated in  Proposition \ref{prop:beta2}.
\end{proof}

\begin{theorem}\label{thm:betti}
With the previous notation, the Betti numbers of the matroid $M_q$ and its elongations are \[\begin{array}{cc} \beta_{1,q^2-q}^{(0)} = \frac{q^4+2q^3+2q^2+q}{2}, & \beta_{1,q^2}^{(0)} = q^5-q^2\\  \beta_{2,q^2-1}^{(0)} = q^5+q^4+q^3 ,& \beta_{2,q^2+q-3}^{(0)} = \frac{q^9-q^7-q^6+q^4}{24}\\
\beta_{3,q^2}^{(0)} = q^5-q^3-q^2+1 , & \beta_{3,q^2+q-2}^{(0)} = \frac{q^9-q^8-q^7+q^6+3q^5+3q^4}{6} \\  \beta_{4,q^2+q-1}^{(0)} = \frac{q^9-2q^8+q^7+3q^6+2q^5-q^4-4q^3}{4} ,  & \beta_{5,q^2+q}^{(0)} = \frac{q^9-3q^8+5q^7-q^6-3q^5-2q^4+6q^2-3q}{6} \\
\beta_{6,q^2+q+1}^{(0)} = \frac{q^9-4q^8+11q^7-17q^6+12q^5-3q^4}{24}
\end{array}\]
\[\begin{array}{cc}\beta_{1,q^2-1}^{(1)} = q^4+q^3+q^2,& \beta_{1,q^2+q-3}^{(1)} = \frac{q^8-q^6-q^5+q^3}{24} \\
\beta_{2,q^2}^{(1)} = q^4+q^3-q-1 ,& \beta_{2,q^2+q-2}^{(1)} = \frac{q^8+q^6+3q^5+4q^4+3q^3}{6} \\
\beta_{3,q^2+q-1}^{(1)} = \frac{q^8+3q^6+3q^5-3q^3-4q^2}{4},& \beta_{4,q^2+q}^{(1)} = \frac{q^8+5q^6-q^5-6q^4-5q^3+6q}{6} \\
\beta_{5,q^2+q+1}^{(1)} = \frac{q^8+7q^6-9q^5-8q^4+9q^3}{24} 
\end{array}\]
\[\begin{array}{cc}
\beta_{1,q^2}^{(2)} = q^2+q+1 ,& \beta_{1,q^2+q-2}^{(2)} = \frac{q^6+2q^5+2q^4+q^3}{6} \\
\beta_{2,q^2+q-1}^{(2)} = \frac{q^6+2q^5+2q^4+q^3}{2},& \beta_{3,q^2+q}^{(2)} = \frac{q^6+2q^5+2q^4-q^3-2q^2-2q}{2} \\ 
\beta_{4,q^2+q+1}^{(2)} = \frac{q^6+2q^5+2q^4-5q^3}{6}
\end{array}\]
\[\begin{array}{ccc}
\beta_{1,q^2+q-1}^{(3)} = \frac{q^4+2q^3+2q^2+q}{2},& \beta_{2,q^2+q}^{(3)} = q^4+2q^3+q^2-1,& \beta_{3,q^2+q+1}^{(3)} = \frac{q^4+2q^3-q}{2}
\end{array}\]
\[\begin{array}{ccc} \beta_{1,q^2+q}^{(4)} = q^2+q+1 ,& \beta_{2,q^2+q+1}^{(4)} = q^2+q, & \beta_{1,q^2+q+1}^{(5)} = 1
\end{array}\]
\end{theorem}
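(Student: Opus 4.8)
The plan is to determine all the Betti numbers not yet computed by pure linear algebra, feeding the structural data of Corollary~\ref{cor:nonzerobetti} and the single extra value of Corollary~\ref{result} into the Herzog-K\"uhl equations of Theorem~\ref{th:HK}, used in the $\phi$-form of Remark~\ref{vandermonde}. Fix $l$ with $0\leqslant l\leqslant 5$ and write $\phi_j^{(l)}$ for $\phi_j(M_q^{(l)})=\sum_i(-1)^i\beta_{i,j}^{(l)}$. By Corollary~\ref{cor:nonzerobetti} the ring $R_{M_q^{(l)}}$ has projective dimension $6-l$, so there are exactly $6-l$ Herzog-K\"uhl equations $\sum_j j^s\phi_j^{(l)}=0$ for $0\leqslant s\leqslant 5-l$, and its only nonzero Betti numbers (besides $\beta_{0,0}^{(l)}=1$) are the listed ones of positive homological index. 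For $q\geqslant 4$ the degrees $j$ occurring among these are pairwise distinct, with the one exception that when $l=0$ the degree $q^2$ carries both $\beta_{1,q^2}^{(0)}$ and $\beta_{3,q^2}^{(0)}$. Hence each $\phi_j^{(l)}$ equals $\pm$ a single unknown Betti number, except that $\phi_{q^2}^{(0)}=-\beta_{1,q^2}^{(0)}-\beta_{3,q^2}^{(0)}$, whose first summand is already known; so knowing all the $\phi_j^{(l)}$ is, in each case, equivalent to knowing all the $\beta_{i,j}^{(l)}$.

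First I would treat $M_q=M_q^{(0)}$. Corollary~\ref{cor:nonzerobetti} supplies $\beta_{1,q^2-q}^{(0)}$ and $\beta_{1,q^2}^{(0)}$, hence $\phi_0^{(0)}=1$ and $\phi_{q^2-q}^{(0)}$, and Corollary~\ref{result} supplies $\beta_{2,q^2-1}^{(0)}=q^3(q^2+q+1)$, hence $\phi_{q^2-1}^{(0)}$. Moving these three known terms to the right-hand side, the six equations ($0\leqslant s\leqslant 5$) become a linear system in the six unknowns $\phi_j^{(0)}$ for $j\in\{q^2,\,q^2+q-3,\,q^2+q-2,\,q^2+q-1,\,q^2+q,\,q^2+q+1\}$, whose coefficient matrix is the Vandermonde matrix on these six distinct nodes and hence invertible. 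Solving it and reading off the five Betti numbers $\beta_{2,q^2+q-3}^{(0)}$, $\beta_{3,q^2+q-2}^{(0)}$, $\beta_{4,q^2+q-1}^{(0)}$, $\beta_{5,q^2+q}^{(0)}$, $\beta_{6,q^2+q+1}^{(0)}$ as $\pm\phi_j^{(0)}$ for the matching $j$, together with $\beta_{3,q^2}^{(0)}=-\phi_{q^2}^{(0)}-\beta_{1,q^2}^{(0)}$, produces the first block of the table.

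The elongations are handled in the same way. For $l=1$ the two known values $\beta_{1,q^2-1}^{(1)},\beta_{1,q^2+q-3}^{(1)}$ from Corollary~\ref{cor:nonzerobetti} reduce the five equations to a square $5\times 5$ Vandermonde system in $\phi_j^{(1)}$ for $j\in\{q^2,\,q^2+q-2,\,q^2+q-1,\,q^2+q,\,q^2+q+1\}$, solved and converted to Betti numbers exactly as above. For $2\leqslant l\leqslant 5$ there are $6-l$ equations but only $5-l$ still-unknown $\phi_j^{(l)}$; since the actual Betti numbers of $M_q^{(l)}$ satisfy all of them the overdetermined system is consistent, any $(5-l)\times(5-l)$ subsystem is again an invertible Vandermonde, and so the unknowns are uniquely determined while the remaining equations serve as a consistency check. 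Running this for $l=2,3,4,5$ yields the remaining blocks; for $l=5$ there is nothing to solve, $\beta_{1,q^2+q+1}^{(5)}=1$ being already the only nonzero Betti number besides $\beta_{0,0}^{(5)}$.

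There is no genuine obstacle once Corollary~\ref{result} is available: its role is precisely that, for $l=0$, the six Herzog-K\"uhl equations would otherwise carry one unknown too many, and supplying $\beta_{2,q^2-1}^{(0)}$ from the Euler-characteristic computation of Proposition~\ref{prop:beta2} makes the system square. After that everything reduces to inverting Vandermonde matrices of size at most $6$; the only points needing care are the bookkeeping of the signs $(-1)^i$ in passing between the $\phi_j^{(l)}$ and the $\beta_{i,j}^{(l)}$, and the routine check that every tabulated value is a non-negative integer --- both unproblematic, the latter relying on $q\geqslant 4$, which is exactly the hypothesis that keeps the degrees $q^2$ and $q^2+q-3$ distinct.
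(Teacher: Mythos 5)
Your proposal is correct and follows essentially the same route as the paper: feed the known values from Corollary~\ref{cor:nonzerobetti} and Corollary~\ref{result} into the Herzog--K\"uhl equations and solve the resulting square (or consistently overdetermined) Vandermonde systems, one for each elongation. You merely make explicit, via the $\phi_j$ of Remark~\ref{vandermonde}, the Vandermonde structure and the unknown counts that the paper records in the paragraph before Proposition~\ref{prop:beta2} and in the parenthetical remark of its proof, where the actual inversion is delegated to Mathematica.
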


\begin{proof}
This follows immediately from Corollary~\ref{cor:nonzerobetti}, Proposition~\ref{prop:beta2} and Theorem~\ref{th:HK}, after using the computer program Mathematica to solve the Herzog-K\"uhl equations (\ref{HKBS}) from Theorem \ref{th:HK} for the Betti numbers appearing in each of the the $\mathbb{N}$-graded resolutions of the Stanley-Reisner rings of the matroids $M_q^{(l)},$ for $l=0,1,\cdots,5$. (After usage of Corollary~\ref{cor:nonzerobetti} which assigns integer values to a sufficient set of  Betti numbers, the coefficient matrices of the Herzog-K\"uhl equations for each of the matroids in question, in terms of those Betti numbers that are still unknown, are now of vandermonde type).
\end{proof}
\begin{remark}
{\rm It is also possible to find all these Betti numbers without using the Herzog-K\"uhl equations:
First Proposition \ref{use} gives, for each $l$ and $i$ in question,  that a subset $Y$ of $E$ is minimal among those sets that have nullity $i$ for the matroid  $M_q^{(l)}$ if and only if $Y$  is minimal among those sets that have nullity $i+l$ for the matroid  $M_q^{(l)}.$
Furthermore one can find the local contributions  $\beta^{(l)}_{i,Y}$,
 for each $Y$ minimal among those sets that have nullity $i$ for the matroid $M_q^{(l)},$ 
 by performing arguments and calculations analogous to those in the proof of Proposition \ref{prop:beta2}.
 The result, $\beta^{(l)}_{i,j}$, is then computed as the product of the number (given in Theorem \ref{th:minimalsubsets})  of subsets $Y$ of $E$ that have cardinality $j$ and are minimal in $N_{i+l},$
 and the common number $\beta^{(l)}_{i,Y}$ for all these sets $Y$. We have done this for all the Betti numbers given in Theorem \ref{thm:betti}, but see no reason to present the calculations here, since usage of a computer program like Mathematica gives the solution for the  $\beta^{(l)}_{i,j}$ directly. If, on the other hand, for some reason, one would be interested in knowing the values of the "local" contributions $\beta^{(l)}_{i,Y}$, one can just divide the values of the $\beta^{(l)}_{i,j}$
  appearing in Theorem \ref{thm:betti} by the corresponding numbers appearing in Theorem \ref{th:minimalsubsets}. }

\end{remark}

\eject

\subsection{Higher weight polynomials and weight spectra}

\begin{theorem}\label{th:polynomials}
Let $q \geqslant 4$ be a prime power. Then  the Veronese code $C_q$ has 9 non-zero generalized weight polynomials, namely
\begin{eqnarray*}
P_0(Z)&=&1 \\
P_{q^2-q}(Z) &=& \binom{q^2+q+1}{2}(Z-1) \\
P_{q^2-1}(Z) &=& (q^2+q+1)q^2(Z-q)(Z-1) \\
P_{q^2}(Z)&=& (q^2+q+1)(Z-1)(Z^2-(q^2-1)Z + 2q^3-2q^2-q+1)\\
P_{q^2+q-3}(Z)&=& \frac{(q^2+q+1)(q+1)q^3(q-1)^2(Z-q)(Z-1)}{24}\\
P_{q^2+q-2}(Z)&=& \frac{(q^2+q+1)(q+1)q^3(Z-1)(Z-q)(Z-(q^2-3q+3))}{6}\\
P_{q^2+q-1}(Z) &=& \frac{(q^2+q+1)(q+1)q(Z-1)(Z-q)(2Z^2-2(q^2-q)Z+(q^4-4q^3+7q^2-4q))}{4}\\
\frac{6P_{q^2+q}(Z)}{(q^2+q+1)(Z-1)} &=&6Z^4- (6q^2+6q-6)Z^3+(3q^4+3q^3-6q)Z^2\\&&-(q^6-q^5+5q^4-5q^3-6q^2+6q)Z\\
&&+(q^7-4q^6+8q^5-5q^4-6q^3+9q^2-3q)\\
\frac{24P_{q^2+q+1}}{(Z-1)(Z-q)} &=& 24Z^4-24q^2Z^3+(12q^4-12q)Z^2\\
&&-(4q^6-4q^5+8q^4-20q^3+12q^2)Z\\&& + (q^8-4q^7+11q^6-17q^5+12q^4-3q^3)
\end{eqnarray*} 
\end{theorem}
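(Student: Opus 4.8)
The proof is a direct substitution into Theorem~\ref{th:Jan}, which expresses $P_w(Z)$ through the Betti numbers $\beta_{i,w}^{(l)}$ of $M_q$ and of its elongations $M_q^{(1)},\dots,M_q^{(5)}$, and all of those have just been tabulated in Theorem~\ref{thm:betti}. Explicitly, for $1\leqslant w\leqslant q^2+q+1$ one has $P_w(Z)=\sum_{l=0}^{5}\sum_{i\geqslant 1}(-1)^{i+1}\beta_{i,w}^{(l)}Z^l(Z-1)$, the term $i=0$ being absent since $\beta_{0,w}=0$ for $w\geqslant 1$; and $P_0(Z)=1$, because the zero word is the unique codeword of weight $0$ in every extension $C_q^{(m)}$. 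By Corollary~\ref{cor:nonzerobetti}, the only weights $w\in\{1,\dots,q^2+q+1\}$ admitting a non-zero $\beta_{i,w}^{(l)}$ are the eight values $q^2-q$, $q^2-1$, $q^2$, $q^2+q-3$, $q^2+q-2$, $q^2+q-1$, $q^2+q$, $q^2+q+1$; for every other $w$ the double sum is empty, so $P_w(Z)=0$. Together with $P_0$, this already gives exactly the nine non-zero generalized weight polynomials claimed.

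It remains to evaluate the eight sums. For a fixed weight $w$, Corollary~\ref{cor:nonzerobetti} pins down precisely which pairs $(i,l)$ occur: they are the ones for which $\beta_{i,w}^{(l)}$ sits in the ``diagonal'' list of that corollary, i.e. for which $i+l$ equals one of the (one or two) homological indices attached to $w$ there. Thus $w=q^2-q$ contributes only $\beta_{1,q^2-q}^{(0)}$, giving at once $P_{q^2-q}(Z)=\beta_{1,q^2-q}^{(0)}(Z-1)=\binom{q^2+q+1}{2}(Z-1)$; the weight $q^2$ contributes the four numbers $\beta_{1,q^2}^{(0)},\beta_{3,q^2}^{(0)},\beta_{2,q^2}^{(1)},\beta_{1,q^2}^{(2)}$; and $q^2+q+1$ contributes the six numbers $\beta_{6,q^2+q+1}^{(0)},\beta_{5,q^2+q+1}^{(1)},\beta_{4,q^2+q+1}^{(2)},\beta_{3,q^2+q+1}^{(3)},\beta_{2,q^2+q+1}^{(4)},\beta_{1,q^2+q+1}^{(5)}$. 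In each case one inserts the explicit values from Theorem~\ref{thm:betti}, factors the common $(Z-1)$ out of every summand, and collects powers of $Z$; the result is a polynomial of degree at most $6$ in $Z$ whose coefficients are polynomials in $q$ with denominators dividing $24$. For the five weights $w\in\{q^2-1,\,q^2+q-3,\,q^2+q-2,\,q^2+q-1,\,q^2+q+1\}$ the residual factor $\sum_{i,l}(-1)^{i+1}\beta_{i,w}^{(l)}Z^l$ vanishes at $Z=q$ identically in $q$, so an extra factor $(Z-q)$ can be pulled out, producing the factored forms displayed in the statement.

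This step is finite and elementary — exactly the kind of manipulation a computer algebra system carries out instantly — so there is no conceptual obstacle; the only real danger is an arithmetic slip while juggling the signs $(-1)^{i+1}$, the powers $Z^l$, and up to six Betti-number substitutions per weight, followed by an honest polynomial simplification. I would therefore check the output against two independent sanity tests. First, $\sum_{w=0}^{q^2+q+1}P_w(Z)=Z^6$ must hold identically, since the left side counts all $q^{6m}$ codewords of $C_q^{(m)}$. Second, each $P_w(q)$ must reproduce the genuine weight enumerator of $C_q$ over $\Fq$: by Proposition~\ref{pr:Hirschfeld} a conic over $\Fq$ has $1$, $q+1$, or $2q+1$ rational points, so there is no codeword of weight $w$ with $(q^2+q+1)-w\notin\{1,q+1,2q+1\}$, forcing $P_w(q)=0$ for $w\in\{q^2-1,q^2+q-3,q^2+q-2,q^2+q-1,q^2+q+1\}$ (consistent with the factor $(Z-q)$ above), while $P_{q^2-q}(q)$, $P_{q^2}(q)$, and $P_{q^2+q}(q)$ must respectively equal $q-1$ times the number of pairs of distinct lines, of irreducible conics together with double lines, and of conics with a single rational point, all read off from Proposition~\ref{pr:Hirschfeld}.
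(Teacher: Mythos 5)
Your proposal is correct and follows essentially the same route as the paper, whose proof is precisely the direct substitution of the Betti numbers from Theorem~\ref{thm:betti} into the formula of Theorem~\ref{th:Jan}. The extra bookkeeping of which pairs $(i,l)$ contribute to each weight, and the two sanity checks ($\sum_w P_w(Z)=Z^6$ and the evaluation at $Z=q$ against Proposition~\ref{pr:Hirschfeld}), are sound additions but not a different method.
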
 

\begin{proof}
This is a direct consequence of Theorems~\ref{th:Jan} and~\ref{thm:betti}.
\end{proof}

\begin{proof}[Proof of Theorem~\ref{mainqbig}]
This is a direct consequence of Theorem~\ref{th:polynomials} and repeated usage of  Theorem~\ref{th:relspecpol}.
\end{proof}

\section{The cases of binary and trinary codes} \label{2and3}

The cases $q=2$ and $q=3$ are very similar to  the "general" case $q \ge 4$, except that some degeneracies appear. It can be shown that in the case $q=2$, where  $q^2-1=q^2+q-3$ and $q^2=q^2+q-2$, we have $\beta_{1,4}^{(0)} = 0$, and all the resolutions in question are linear, and easy to cope with (The code is MDS for $q=2$, and then both $M$ and all its elongation matroids are uniform, and their associated Betti numbers then follow directly from  the Herzog-K\"uhl equations). 

In the case $q=3$, we have $\beta_{1,9}^{(0)}=0$. This constitutes a difference with the cases $q \ge 4$, where the coefficient  $\beta_{1,q^2}^{(0)}$ is non-zero.  The non-zero value is due to the complement $X$ of the irreducible conic (with $q+1$ points). For $q \geqslant 4$, these complements are minimal sets in $N_1$. But for $q=3$ an irreducible conic has $4$ points, and is always included in a pair of distinct lines, and therefore would not lead to a minimal element in $N_1$.
Apart from this difference from the cases $q \ge 4$ the arguments for establishing the Betti numbers, generalized weight polynomials, and higher weight spectra are almost identical for $q=3$ to those in the cases $q \ge 4$. 

We now give just the main result about these 2 cases, without going more into the details concerning the computation of the Betti numbers and the general weight polynomials:

\begin{theorem}\label{mainq3}
The higher weight spectra of the Veronese code $C_3$ is
\[\begin{array}{cccc}
A_6^{(1)}=78, & A_9^{(1)}=247, & A_{12}^{(1)}=39,&A_8^{(2)}=117\\
A_9^{(2)}=286,& A_{10}^{(2)}=1404,&  A_{11}^{(2)}=3042,& A_{12}^{(2)}=3705\\
A_{13}^{(2)}=2457,& A_9^{(3)}=13,& A_{10}^{(3)}=234,& A_{11}^{(3)}=2340\\
A_{12}^{(3)}=10296,& A_{13}^{(3)}=20997,& A_{11}^{(4)}=78,& A_{12}^{(4)}=1417\\
A_{13}^{(4)}=9516,& A_{12}^{(5)}=13,&  A_{13}^{(5)}=351,&A_{13}^{(6)}=1,
\end{array}\] all the other being $0$.
\end{theorem}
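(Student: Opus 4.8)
The plan is to mimic exactly the chain of arguments carried out for $q\geqslant 4$, carefully tracking the single structural degeneracy that occurs at $q=3$, namely that irreducible conics (having only $q+1=4$ points) are always contained in a pair of distinct lines and hence contribute no minimal element of $N_1$. Concretely, I would proceed as follows. First, re-run the analysis of Proposition~\ref{pr:Hirschfeld} and the paragraph following it with $q=3$: the conics in $\mathbb{P}^2_3$ split into $13$ double lines, $78$ pairs of distinct lines, $3^5-3^2=234$ irreducible conics, and $\frac{3\cdot 2\cdot 13}{2}=39$ conics with a single rational point. Since every irreducible conic sits inside a pair of lines, the only circuits of $M_3$ are the complements of pairs of distinct lines, so $\beta_{1,9}^{(0)}=0$ and $\beta_{1,13-(2\cdot 3+1)}^{(0)}=\beta_{1,6}^{(0)}=78$ is the sole nonzero $\beta_{1,j}^{(0)}$; in particular $d_1=6$, not $q^2-q$ computed via a different extremal configuration. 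Then I would redo Theorem~\ref{th:minimalsubsets} for $q=3$, using Lemma~\ref{lem:Huguesliker} verbatim, to list the inclusion-minimal elements of each $N_i$ and of each $N_i^{(l)}$ (via Proposition~\ref{use}); the enumerations are the same polynomial expressions in $q$ evaluated at $q=3$, except that wherever "irreducible conic" appeared as a minimal configuration one must either discard it or check whether the count merges with another stratum. This yields the $q=3$ analogue of Corollary~\ref{cor:nonzerobetti}, i.e. which $\beta_{i,j}^{(l)}$ can be nonzero.

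Next I would count Herzog--K\"uhl equations against unknowns for each elongation $M_3^{(l)}$, $l=0,\dots,5$, exactly as in the paragraph before Proposition~\ref{prop:beta2}. Because $\beta_{1,9}^{(0)}=0$, the matroid $M_3$ itself now has one fewer nonzero $\beta_{1,j}^{(0)}$ than in the generic case, which changes the equation/unknown bookkeeping; I expect that, just as for $q\geqslant 4$, one local contribution still has to be computed by hand — the natural candidate is again $\beta_{2,E\setminus X}^{(0)}$ for $X$ a line plus an external point, via an Euler-characteristic computation identical in spirit to Proposition~\ref{prop:beta2} (one may also need to handle the quadrilateral stratum, since for $q=3$ the degree $q^2+q-3$ coincides with $q^2-1$ and strata can collide). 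With enough local contributions pinned down, Remark~\ref{vandermonde} guarantees the remaining Herzog--K\"uhl systems are Vandermonde and uniquely solvable, giving all $\beta_{i,j}^{(l)}$. Feeding these into Theorem~\ref{th:Jan} produces the generalized weight polynomials $P_w(Z)$, and finally repeated application of Theorem~\ref{th:relspecpol} — solving triangularly for $A_w^{(1)}, A_w^{(2)}, \dots, A_w^{(6)}$ in that order — yields the table in the statement. One sanity check worth building in: $A_w^{(6)}$ is forced to be $P_w(q^6)/\prod_{i=0}^{5}(q^6-q^i)$ and must equal $1$ for $w=q^2+q+1=13$ and $0$ otherwise, and $\sum_w A_w^{(1)}$ must equal $(q^6-1)/(q-1)=364$, the number of nonzero codewords up to scalars; here $78+247+39=364$.

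The main obstacle is not the final linear algebra but correctly identifying the minimal elements of the $N_i^{(l)}$ at $q=3$: several degree values that are distinct for $q\geqslant 4$ become equal when $q=3$ (the excerpt already flags $q^2-1=q^2+q-3$ at $q=2$; at $q=3$ one has $q^2+q-3=q^2-1=8$ is false — $9-3\ne 9-1$ — but for instance $q^2+q-2=10$ while $q^2-1=8$, so one must check each coincidence), so strata that contribute to the same Betti number $\beta_{i,j}^{(l)}$ merge and their counts add. Getting this combinatorial accounting exactly right, and then verifying that the disappearance of the irreducible-conic circuits does not secretly remove a needed equation (forcing a second hand-computed local contribution beyond $\beta_{2,q^2-1}^{(0)}$), is the delicate part; everything downstream is then a deterministic, machine-checkable computation, precisely as in the proofs of Theorems~\ref{thm:betti}, \ref{th:polynomials} and~\ref{mainqbig}.
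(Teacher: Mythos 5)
Your proposal follows essentially the same route as the paper: the paper's own treatment of $q=3$ consists precisely of observing that $\beta_{1,9}^{(0)}=0$ because an irreducible conic over $\mathbb{F}_3$ has only $4$ points and is always contained in a pair of distinct lines, and then asserting that the rest of the $q\geqslant 4$ machinery (minimal elements of the $N_i$ and their elongations, the Herzog--K\"uhl systems, Theorem~\ref{th:Jan}, and repeated use of Theorem~\ref{th:relspecpol}) goes through unchanged --- which is exactly what you propose. The one slip worth flagging is in your aside on degree coincidences: the collision that actually occurs at $q=3$ is $q^2+q-3=q^2=9$ (so, e.g., the strata $A_{q^2}^{(2)}=52$ and $A_{q^2+q-3}^{(2)}=234$ merge to give $A_9^{(2)}=286$), not $q^2+q-3=q^2-1$; since your procedure indexes everything by actual cardinality and lets the polynomials $P_w$ perform the merging, this self-corrects, and likewise your hedge about needing an extra hand-computed local contribution is harmless (working with the quantities $\phi_j$ of Remark~\ref{vandermonde}, the level-$0$ system for $q=3$ already has as many independent equations as unknowns once $\beta_{1,9}^{(0)}=0$ is known).
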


\begin{theorem}\label{mainq2}
The higher weight spectra of the Veronese code $C_2$ is
\[\begin{array}{cccc}
A_{2}^{(1)}=21, & A_{4}^{(1)}= 35 ,& A_{6}^{(1)}= 7,& A_{3}^{(2)}=35\\
A_{4}^{(2)}=105,& A_{4}^{(3)}=35,& A_{5}^{(2)}=210,& A_{5}^{(3)}=210\\ A_{5}^{(4)}=21,& A_{6}^{(2)}=210,& A_{6}^{(3)}=560,& A_{6}^{(4)}=175\\
A_{6}^{(5)}=7,& A_{7}^{(2)}=91,& A_{7}^{(3)}=590,& A_{7}^{(4)}=455\\  &A_{7}^{(5)}=56,& A_{7}^{(6)}=1 
\end{array}\] all the other being $0$.
\end{theorem}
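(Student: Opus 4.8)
The plan is to rerun the pipeline of the proof of Theorem~\ref{mainqbig}, which becomes much shorter for $q=2$ because the code is MDS and hence all the matroids involved are uniform. First I would identify the matroids. Since $C_2$ is a $[7,6,2]$ MDS code, any two columns of a parity check matrix are dependent while any single column is not, so $M_2$ is the uniform matroid $U_{1,7}$ of rank $1$ on $7$ elements, and therefore $M_2^{(l)}=U_{l+1,7}$ for $0\leqslant l\leqslant 5$. For a uniform matroid $U_{r,n}$ the nullity is $n(X)=\max\{0,|X|-r\}$, so its $N_i$ is the family of \emph{all} subsets of cardinality $r+i$ and each of them is inclusion minimal; by Theorem~\ref{thm:AAECC} and Proposition~\ref{use} the only non-zero Betti numbers of $M_2^{(l)}$ are then $\beta_{0,0}^{(l)}=1$ and $\beta^{(l)}_{i,\,l+1+i}$ for $1\leqslant i\leqslant 6-l$. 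In particular all the Stanley--Reisner resolutions are linear and $\beta^{(0)}_{1,4}=0$: unlike the case $q\geqslant 4$, at $q=2$ the complement of an irreducible conic has too few points to yield a minimal element of $N_1$ (every $5$ of the $7$ points already lie on a pair of lines), so the ``irreducible conic'' circuits of Corollary~\ref{cor:nonzerobetti} are absent and the formula $\beta^{(0)}_{1,q^2}=q^5-q^2$ does not specialize to $q=2$.

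Next I would determine the actual values of these Betti numbers. For each $l$ the number of unknowns, namely the $6-l$ numbers $\beta^{(l)}_{i,\,l+1+i}$, equals the number $6-l$ of Herzog--K\"uhl equations of Theorem~\ref{th:HK} for the matroid $M_2^{(l)}$ (whose rank is $l+1$); moreover, since in each relevant degree $j$ there is a single homological index $i$ with $\beta^{(l)}_{i,j}\neq 0$ (namely $i=j-l-1$), the equations become $\sum_j j^s\phi_j(M_2^{(l)})=0$ with $\phi_{l+1+i}(M_2^{(l)})=(-1)^i\beta^{(l)}_{i,\,l+1+i}$, and by Remark~\ref{vandermonde} this is a Vandermonde system with a unique solution. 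This is precisely where $q=2$ is \emph{easier} than $q\geqslant 4$: there is no surplus unknown, hence no analogue of Proposition~\ref{prop:beta2} is required. One finds, for instance, $\beta^{(0)}_{1,2}=\binom{7}{2}=21$ (the number of circuits of $U_{1,7}$), $\beta^{(0)}_{2,3}=70$, $\beta^{(0)}_{3,4}=105$, $\beta^{(0)}_{4,5}=84$, $\beta^{(0)}_{5,6}=35$, $\beta^{(0)}_{6,7}=6$, and analogously for the elongations; these are the classical Betti numbers of uniform matroids (equivalently of skeleta of a simplex) and, exactly as in Theorem~\ref{thm:betti}, can simply be obtained from Mathematica.

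With the Betti numbers in hand, Theorem~\ref{th:Jan} produces the generalized weight polynomials of $C_2$ via $P_w(Z)=\sum_{l\geqslant 0}\sum_{i\geqslant 0}(-1)^{i+1}\beta^{(l)}_{i,w}Z^l(Z-1)$; because of the coincidences $q^2-1=q^2+q-3=3$ and $q^2=q^2+q-2=4$ the nine weights of the generic picture fuse into the seven weights $w\in\{0,2,3,4,5,6,7\}$, so care is needed in recording which contributions land at $w=3$ and at $w=4$. Finally I would invert via Theorem~\ref{th:relspecpol}: for a fixed $w$ the equality $P_w(Z)=\sum_{r=0}^{6}A^{(r)}_w\prod_{i=0}^{r-1}(Z-2^{i})$ holds for $Z=2,4,8,\dots$ and hence identically, so each $A^{(r)}_w$ is read off as the coordinate of $P_w$ in the ``$q$-factorial'' basis $\bigl\{\prod_{i=0}^{r-1}(Z-2^{i})\bigr\}_{r=0}^{6}$, equivalently by solving the resulting triangular linear system; this yields the table of Theorem~\ref{mainq2}.

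There is no genuine obstacle in this argument: it is, step for step, the $q\geqslant 4$ proof with the simplification that the Herzog--K\"uhl equations already suffice. The only points demanding attention are (i) verifying the structural claim $M_2=U_{1,7}$ (which in particular gives $\beta^{(0)}_{1,4}=0$ and the linearity of all resolutions), resting on the elementary fact that every $5$ of the $7$ points of $\mathbb{P}^2(\mathbb{F}_2)$ lie on a conic while no $6$ of them do; (ii) bookkeeping the fused weights so that the contributions at $w=3,4$ are not miscounted; and (iii) the routine but not short arithmetic of solving the Vandermonde and triangular systems, best delegated to a computer algebra system as in Theorem~\ref{thm:betti}.
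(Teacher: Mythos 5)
Your proposal is correct and follows exactly the route the paper indicates for $q=2$: identify $M_2$ and its elongations as the uniform matroids $U_{l+1,7}$ (since $C_2$ is MDS with an all-ones $1\times 7$ parity check matrix), observe that all resolutions are linear so the Herzog--K\"uhl equations alone determine the Betti numbers (no analogue of Proposition~\ref{prop:beta2} is needed), then apply Theorem~\ref{th:Jan} and invert via Theorem~\ref{th:relspecpol}. Your Betti numbers $21,70,105,84,35,6$ for $M_2$ and the resulting table check out (e.g.\ the row sums agree with the Gaussian binomial counts of subcodes), so this is a faithful, correctly executed expansion of the argument the paper only sketches in Section~\ref{2and3}.
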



\begin{thebibliography}{A}

 \bibitem {AB} Auslander, M., Buchsbaum, D. A., \textit{Homological dimension in local rings}, Transactions of the American Mathematical Society, 85,  p. 390-405, 1957.

 \bibitem {BS} Boij, M., S{\o}derberg, J.  \textit{Graded Betti numbers of Cohen-Macaulay modules and the multiplicity conjectures},  Journal of the London Math. Soc. 78, p. 85-106, 2008.
 
 \bibitem{HH} Herzog, J., Hibi, T., \textit{Monomial Ideals}, Graduate Texts in Mathematics, 260, Springer-Verlag, 2011. 

 \bibitem{HeK} Herzog, J., K\"uhl, M.,  \textit{On the Betti numbers of finite pure and linear resolutions}. Comm. Algebra, 12(13-14). p.1627-1646, 1984.
 
\bibitem {Hi} Hirschfeld, J., \textit{Projective geometries over finite fields}, Oxford University Press, ISBN 019853260, 1979.



\bibitem {J} Jurrius, R. \textit{Weight enumeration of codes from finite spaces}, 
Des. Codes Cryptogr. 63, p. 321-330, 2012.

\bibitem {JP} Jurrius, R. Pellikaan, G. R. \textit{Algebraic geometric modeling in information theory}. In: Codes, arrangements and matroids. series on Coding Theory and Cryptology, World Scientific Publishing, Hackensack, NJ, (2011).
Des. Codes Cryptogr. 63, p. 321-330, 2012.

\bibitem {JV} Johnsen, T., Verdure, H. \textit{Hamming weights and Betti numbers of Stanley-Reisner rings associated to matroids}, 
AAECC, 201, p. 73-93, 2013.

\bibitem {JRV} Johnsen, T., Roksvold, J.N., Verdure, H. \textit{A generalization of weight polynomials to matroids}, Discrete Mathematics 339, p. 632-45, 2013.
 
\bibitem{Oxley} J.G.~Oxley, \textit{Matroid theory}, Oxford University Press, 1992. 

\bibitem {PS} Peskine, C., Szpiro, L., \textit{Syzygies et multiplicités},  C. R. Acad. Sci., Paris, Sér. Ap., 278, p. 632-45, 1974.
 
\bibitem {So} S{\o}rensen, A.B.,   \textit{ Projective Reed-Muller codes}, IEEE Trans. Inform. Theory 37, no. 6, p. 1567--1576, 1991.

\bibitem{W}
V.K.~Wei, \emph{Generalized Hamming weights for linear codes}, IEEE Trans. Inf. Th., vol. 37, No. 5, p. 1412-1418, 1991. 

\bibitem {Z} Zanella, C. \textit{Linear Sections of the finite Veronese Varieties and Authentication Systems Defined Using Geometry}, 
Des. Codes Cryptogr. 13, p. 199-212, 1998.

\end{thebibliography}
\end{document}